\newtheorem{defi}{Definition}[section]
\newtheorem{prop}[defi]{Proposition} 
\newtheorem{lemma}[defi]{Lemma} 
\newtheorem{teo}[defi]{Theorem}
\newtheorem{cor}[defi]{Corollary} 
\newtheorem{rmk}[defi]{Remark}
\newtheorem*{teo*}{Theorem}
\newcommand{\Z}{\mathbb{Z}}
\newcommand{\Spf}{\mathrm{Spf}}
\newcommand{\Sp}{\mathrm{Spec}}
\providecommand{\bysame}{\leavevmode\hbox to3em{\hrulefill}\thinspace}
\providecommand{\MR}{\relax\ifhmode\unskip\space\fi MR }
\providecommand{\href}[2]{#2}
\begin{document}
\title{\textsc{On $p$-adic differential equations on semistable varieties II}}
\author{Valentina Di Proietto\footnote{Institut de Recherche Math\'ematique 
Avanc\'ee, Universit\'e de Strasbourg, 7 rue Ren\'e Descartes, 
Strasbourg 67084, France.} \, and \ Atsushi Shiho\footnote{Graduate School of 
Mathematical Sciences, the University of Tokyo, 3-8-1 Komaba, Meguro-ku, Tokyo 153-8914, Japan. Mathematics Subject Classification (2010): 12H25, 14F35.}}
\date{}
\maketitle

\textsc{Abstract -} \begin{small} 
This paper is a complement to the paper \cite{DiP12}. 
Given an open variety over a DVR with semistable reduction, 
the author constructed in \cite{DiP12} 
a fully faithful algebraization functor from 
the category of certain log overconvergent isocrystals 
on the special fiber to the category of 
modules with regular integrable connection on the generic fiber. 
In this paper, we prove that, with convenable hypothesis, 
this functor is a tensor functor whose essential image is closed under 
extensions and subquotients. 
As a consequence, we can find suitable Tannakian subcategories of 
log overconvergent isocrystals and of 
modules with regular integrable connection 
on which the algebraization functor is 
an equivalence of Tannakian categories. 
\end{small}
\section{Introduction}

Let $V$ be a complete discrete valuation ring of mixed characteristic $(0,p)$, let $K$ be its fraction field and let $k$ be the residue field. 
Let $X$ be a proper semistable variety over $V$ 
with special fiber $X_k$ and generic fiber $X_K$, endowed with a normal crossing divisor $D$. We denote by $U$ the complement of $D$ in $X$, with special fiber $U_k$ and generic fiber $U_K$. We consider $X$ as a log scheme, with the log structure associated to $D \cup X_k$. 
We consider $\mathrm{Spec}(V)$ also as a log scheme, with the log structure 
associated to the closed point. 

Let $t$ be the number of irreducible components of $D$. 
We call a subset $\Sigma$ of the form 
$\prod_{i=1}^t \Sigma_i$ in $\mathbb{Z}_p^t$ 
(NID) (resp. (NLD)) if, for any $i$ and any $\alpha, \beta \in \Sigma_i$, 
$\alpha -\beta$ is  not a non zero integer (resp. is $p$-adically non-Liouville). 
For $\Sigma \subset \mathbb{Z}_p^t$ which is (NID) and (NLD), 
the author, in \cite{Shilogext}, introduced the notion of $\Sigma$-unipotent 
monodromy, as a $p$-adic analogue for isocrystals of the classical notion 
of regular singularity for modules with 
integrable connection, in the case where $X$ is smooth. 
(In \cite{Shilogext}, we only need the special fibers $X_k, U_k$. 
However, this is not our concern here.) 
In \cite{DiP12} the author generalized this notion 
to the present situation %%case of semistable variety 
and constructed a fully faithful algebraization functor from the category of 
log overconvergent isocrystals on $U_k$ with $\Sigma$-unipotent monodromy 
to the category of modules with integrable connection on $U_K$, 
regular along the generic fiber $D_K$ of $D$. 

If $\Sigma$ is a subgroup of $\Z_p^t$, the above algebraization functor 
would be a tensor functor. However, 
in this case, $\Sigma$ does not contain any non-zero rational number 
because of the (NID) hypothesis, and this would not be a natural condition 
because any log overconvergent isocrystals having geometric origin 
would not belong to the category. 

So we need to consider a more natural condition. 
As observed in \cite{Shilogext}, remark 1.4, 
the notion of $\Sigma$-unipotent monodromy depends only on 
the reduction $\overline{\Sigma}$ 
of $\Sigma$ modulo $\mathbb{Z}^t$. So we can call the notion 
of $\Sigma$-unipotent monodromy as 
$\overline{\Sigma}$-unipotent monodromy. Then 
we can start from a subset $\overline{\Sigma} = \prod_{i=1}^t 
\overline{\Sigma}_i$ of $(\mathbb{Z}_p/\mathbb{Z})^t$ 
satisfying the (NLD) condition (defined in a way similar to above) 
and we can consider the category of log overconvergent isocrystals with 
$\overline{\Sigma}$-unipotent monodromy. If moreover 
$\overline{\Sigma}$ is a subgroup of $(\mathbb{Z}_p/\mathbb{Z})^t$, 
this category is a rigid abelian tensor category. 
Typical example of $\overline{\Sigma}$ is the group 
$(\mathbb{Z}_{(p)}/\mathbb{Z})^t$, in which case 
any log overconvergent isocrystals having geometric origin 
would belong to the category. 

To construct the algebraization functor in this setting, 
we choose a section $\tau:\mathbb{Z}_p/\mathbb{Z}\rightarrow \mathbb{Z}_p$ 
of the canonical projection and we proceed as in \cite{DiP12}: 
It is obtained as the composite of the following four functors. 
\begin{enumerate}
\item[(1)] 
The equivalence of categories between 
the category of log overconvergent isocrystals on $U_k$ 
with 
$\overline{\Sigma}$-unipotent monodromy 
($= \tau(\overline{\Sigma}) := \prod_{i=1}^t \tau(\Sigma_i)$-unipotent 
monodromy) and the category of log convergent isocrystals on $X_k$ 
with exponents in $\tau(\overline{\Sigma})$, which is 
the $p$-adic semistable version of the canonical logarithmic extension of 
Deligne, Andr\'e-Baldassarri. 
\item[(2)] 
The fully faithful functor of `forgetting the convergence condition' 
from the category of log convergent isocrystals on $X_k$ 
with exponents in $\tau(\overline{\Sigma})$ to 
the category of modules with integrable log connection 
on the $p$-adic completion $\hat{X}$ of $X$ with 
exponents in $\tau(\overline{\Sigma})$. 
\item[(3)]
The GAGA equivalence 
between the category of modules with integrable log connection 
on $\hat{X}$ with 
exponents in $\tau(\overline{\Sigma})$ and 
that on %% of modules with integrable log connection
$X_K$ with 
exponents in $\tau(\overline{\Sigma})$. 
\item[(4)] 
The fully faithful restriction functor from 
the category of 
modules with integrable log connection on $X_K$ 
with exponents in 
$\tau(\overline{\Sigma})$ to that of 
modules with regular integrable connection on $U_K$, 
whose essential image lands in the subcategory of objects with 
exponents in $\overline{\Sigma}$. 
%%By logarithmic extension theorem of Deligne and Andr\'e-Baldassarri, 
%%it lands in the category of coherent modules 
%%with integrable connection on $U_K$, regular along $D_K$ with 
%%$\overline{\Sigma}$-unipotent monodromy. 
\end{enumerate}

Note that, since $\tau(\overline{\Sigma})$ is not necessary a group, 
the functor (1) cannot be a tensor functor 
and the composite, which we denote by $\psi_{\tau}$, depends 
a priori on the choice of $\tau$. Nevertheless, we prove in this paper that 
the functor $\psi_{\tau}$ is a tensor functor which is independent of 
$\tau$. We prove this by looking carefully the definition of 
$\psi_{\tau}$ and using techniques developed in 
\cite{Shilogext}, \cite{Shiparalog}. 
We also prove that the essential image of the functor $\psi_{\tau}$ 
is closed by extensions and subquotients. 

As a consequence, we have the surjection of Tannaka duals when 
$U_K$ is connected and admits a $K$-rational point. 
Also, we can find 
Tannakian subcategories of log overconvergent isocrystals on 
$U_k$ with $\overline{\Sigma}$-unipotent monodromy and of modules with regular integrable connection on $U_K$ 
strictly containing the unipotent parts, on which 
the functor $\psi_{\tau}$ becomes an equivalence of categories. 
(It suffices to consider the smallest subcategory containing 
a log overconvergent isocrystal ${\mathcal{E}}$ with 
$\overline{\Sigma}$-unipotent monodromy and closed under 
extensions, subquotients, tensors and duals.) 

The content of each section is as follows. 
In the second section, we give preliminaries and fix some notations 
on our geometric setting. In the third section, 
we recall the definitions and the results in \cite{DiP12} 
concerning log-$\nabla$-modules on 
rigid analytic spaces and isocrystals. 
In the fourth section, we recall the definitions and 
the results in \cite{DiP12}, \cite{AndBal01} 
concerning modules with integrable connection on (formal log) schemes. 
In the fifth section, we give a proof of our results explained above. 

\section{Preliminaries and notation}\label{Preliminaries and notation}
Let $V$ be a complete discrete valuation ring of mixed characteristic $(0,p)$ with uniformizer $\pi$, let $K$ be its fraction field and let $k$ be the residue field.
Let $X$ be a proper semistable variety over $\Sp (V)$ with special fiber $X_k$ and generic fiber $X_K$, and let $D$ be a horizontal normal crossing divisor. This means that \'etale locally we have a cartesian diagram
\begin{equation}\label{etale locally}
\xymatrix{ 
\ D=\bigcup_{j=1}^{s}D_{j}\ \ar@{^(->}[r] \ar[d] &\ X \ar[d]  \\  
\ \bigcup_{j=1}^s\{y_j=0\}\  \ar@{^(->}[r]& \  \Sp (V[x_1,\dots,x_n,y_1,\dots,y_m]/(x_1\dotsm x_r-\pi) )  \\ 
}  
\end{equation}
where the vertical maps are \'etale and the horizontal maps are closed immersions.

We consider on $X$ the log structure $M$ associated to the divisor with normal crossings $X_k\cup D$ in $X$; the structural morphism from $X$ to $\Sp (V)$ extends to a log smooth morphism of log schemes $(X,M)\rightarrow(\Sp(V),N)$, where $\textrm{Spec} (V)$ is endowed with the log structure $N$ associated to the closed point. 
We denote the pull-back of the log structure $M$ (resp. $N$) to 
$X_k$ or $X_K$ (resp. $\Sp (k)$ or $\Sp (K)$) 
also by $M$ (resp. $N$), by abuse of notation. 
Note that the log structure $N$ on $\Sp (K)$ 
is the trivial log structure. Let us put $U := X \setminus D$ and denote 
the canonical open immersion $U \hookrightarrow X$ by $j$. 
We denote the pull-back of 
the log structure $M$ to $U$ again by $M$.

We denote by $\hat{X}$ and $\hat{D}$ the $p$-adic completion of $X$ and $D$, 
respectively. We denote the pull-back of 
the log structure $M$ to $\hat{X}$ again by $M$ and 
the pull-back of 
the log structure $N$ to $\Spf (V)$ again by $N$. 
\'Etale locally we have a diagram analogous to (\ref{etale locally}) for formal schemes:
\begin{equation}\label{etale locally for}
\xymatrix{ 
\ \hat{D}=\bigcup_{j=1}^{s}\hat{D}_{j}\ \ar@{^(->}[r] \ar[d] &\ \hat{X} \ar[d]  \\  
\ \bigcup_{j=1}^s\{y_j=0\}\  \ar@{^(->}[r]& \  \Spf (V\{x_1,\dots,x_n,y_1,\dots,y_m\}/(x_1\dotsm x_r-\pi) ).  \\ 
}  
\end{equation}
Let $\hat{X}_{sing}$ and $\hat{D}_{sing}$ be 
the singular loci of $\hat{X}$ and $\hat{D}$ respectively, 
and we define $\hat{X}^{\circ}, \hat{D}^{\circ}$ as follows: 
$$\hat{X}^{\circ}=\hat{X} \setminus (\hat{X}_{sing}\cup \hat{D}_{sing}), 
\quad 
\hat{D}^{\circ}=\hat{D} \setminus 
(\hat{X}_{sing}\cup \hat{D}_{sing})=\hat{X}^{\circ}\cap \hat{D}.$$
When we consider the situation \'etale locally and fix a diagram (\ref{etale locally for}), we can define the open formal subscheme $\hat{X}_i^{\circ}$ of 
$\hat{X}$ by 
$\hat{X}_i^{\circ} := \{x_{i'} \not= 0 \text{ for } \forall i' \not= i\}$ 
and the open formal subscheme $\hat{X}^{\circ}_{i,j}$ of $\hat{X}_i^{\circ}$ 
by $\hat{X}^{\circ}_{i,j} := \{
x_{i'} \not= 0 \text{ for } \forall i' \not= i, 
y_{j'} \not= 0 \text{ for } \forall j' \not= j\}$. 
Moreover we define $\hat{D}^{\circ}_{i,j}$ by 
$\hat{D}^{\circ}_{i,j} := \hat{X}^{\circ}_{i,j}\cap\hat{D}=
\hat{X}^{\circ}_{i,j}\cap\hat{D}_j$. 
With this notations we have the following relations:
$$\coprod_{i}\hat{X}_i^{\circ}=\hat{X}-\hat{X}_{sing}, \quad 
\coprod_{i,j}\hat{X}^{\circ}_{i,j}=\hat{X}^{\circ}, \quad 
\coprod_{i,j}\hat{D}^{\circ}_{i,j}=\hat{D}^{\circ}.$$
If we denote by the subscript $_K$ the rigid analytic space associated to a formal scheme, then the sets  $\hat{D}^{\circ}_{i,j;K}$ and $\hat{X}^{\circ}_{i,j;K}$ can be described as follows:
$$\hat{X}^{\circ}_{i,j;K}=\{P\in \hat{X}_{K}| \,\forall i'\neq i \,\,\,|x_{i'}(P)|=1 \,\,\,,\forall j'\neq j \,\,\,|y_{j'}(P)|=1\,\},$$
$$\hat{D}^{\circ}_{i,j;K}=\{P\in \hat{X}_{K}| \,\forall i'\neq i \,\,\,|x_{i'}(P)|=1 \,\,\,,\forall j'\neq j \,\,\,|y_{j'}(P)|=1\,\,\,,y_j(P)=0\}.$$
Finally we  denote by $\hat{U}$ the open formal subscheme complement of $\hat{D}$ in $\hat{X}$.

\section{Log-$\nabla$-modules and isocrystals}\label{Log-nabla-modules and isocrystals}
We recall the notion of (log-)$\nabla$-modules on rigid spaces defined and used by Kedlaya in \cite{Ked07} and by the author in \cite{Shilogext}, \cite{Shiparalog}, \cite{Shi11a} and \cite{ShiCUT}. 

\begin{defi}
Let $Y$ be a smooth rigid analytic space over $K$. A $\nabla$-module on $Y/K$ is a coherent $\mathcal{O}_Y$-module $E$ equipped with an integrable connection 
$$\nabla: E\rightarrow E\otimes \Omega^1_{Y/K}$$
with $\Omega^1_{Y/K}$ the sheaf of continuous one forms on the rigid analytic space $Y$. 
\end{defi}

It is known that, for a $\nabla$-module $(E,\nabla)$, $E$ is automatically 
a locally free $\mathcal{O}_Y$-module of finite rank. 

\begin{defi}
Let $Y$ be a smooth rigid analytic space over $K$, with $y_1, \dots, y_n$ $\in$ $\Gamma(Y, \mathcal{O}_Y)$. A log-$\nabla$-module on $Y/K$ with respect to $y_1, \dots, y_n$ is a locally free $\mathcal{O}_Y$-module $E$ of finite rank 
equipped with an integrable connection
$$\nabla: E\rightarrow E\otimes \omega^1_{Y/K}, $$
where 
$$\omega^1_{Y/K}=(\Omega^1_{Y/K} \oplus \bigoplus_{j=1}^n \mathcal{O}_{Y}\mathrm{dlog}y_j)/L$$
with $L$ the coherent sub $\mathcal{O}_{Y}$-module generated by $(\mathrm{d}y_j,0)-(0,y_j \mathrm{dlog}y_j)$ for $1\leq j \leq n.$ \par 
\end{defi}

Contrary to the case of $\nabla$-modules, the local freeness should be put in 
the definition in the case of log-$\nabla$-modules. 

\begin{rmk}\label{log-nonlog}
When $y_1, \dots, y_n$ are invertible, the notion of a log-$\nabla$-module 
on $Y/K$ with respect to $y_1, \dots, y_n$ is the same as 
the notion of a $\nabla$-module on $Y/K$. 
\end{rmk}

For $j$ $\in$ $\{1,\dots, n\}$ there is a natural map 
$$\Omega^1_{Y/K} \oplus \bigoplus_{j' \neq j}\mathcal{O}_{Y}\textrm{dlog}y_j\rightarrow \omega^{1}_{Y/K}, $$
whose image we denote by $M_{j}$. 
If the zero loci $D_{j}$ 
of $y_j \,(1 \leq j \leq n)$ in $Y$ are smooth and meet transversally, 
we have the isomorphisms $\omega^{1}_{Y/K}/M_j \cong 
\mathcal{O}_{D_j} \textrm{dlog}y_j \cong \mathcal{O}_{D_j}$. 

\begin{defi}\label{analyticexponents}
Let $Y, y_1, \dots, y_n$ be as above and assume that 
the zero loci $D_{j}$ 
of $y_j \,(1 \leq j \leq n)$ in $Y$ are affinoid, smooth 
and meet transversally. For a log-$\nabla$-module $(E,\nabla)$ on $Y$, 
let $\textrm{res}_j$ be the endomorphism on 
$E \otimes \mathcal{O}_{D_j}$ induced by 
$$E\rightarrow E \otimes \omega^1_{Y/K} \rightarrow 
E\otimes \omega^1_{Y/K}/M_j \cong 
E\otimes \mathcal{O}_{D_j} \mathrm{dlog}y_j \cong E \otimes \mathcal{O}_{D_j}. 
$$
We call it the residue of $E$ along $D_{j}.$ 
Thanks to proposition-definition 1.24 in \cite{Shi11a} (based on 
proposition 1.5.3 of \cite{BalChi93}), we know that 
there exists a minimal monic polynomial $P_j$ $\in$ $K[T]$ 
such that $P_j(\mathrm{res}_j)=0$. We call the roots of $P_j$ 
the exponents of $E$ along $D_{j}$.
\end{defi}

\begin{rmk}\label{analyticexponents-patching}
It is easy to see that the above definition works also in the case 
where $Y$ has a finite admissible covering $Y = \bigcup_{i=1}^m Y_i$ such 
that each $Y_i \cap D_j$ are affinoids: Indeed, we can define 
the minimal monic polynomial in the definition 
for each $Y_i \cap D_j$, and it suffices to define the polynomial $P_j$ as 
the least common multiple of them. 
\end{rmk}

\begin{comment}
\begin{defi}
Let the notations be as above. 
For $\Sigma=\prod_{j=1}^n\Sigma_j\subset \mathbb{Z}_p^n$, we denote by $\mathrm{LNM}_{Y, \Sigma}$ the category of log-$\nabla$-modules on $Y$ with respect to $y_1, \dots, y_n$ whose exponents along $D_{j}$ are in $\Sigma_j$ for all $j$. 
\end{defi}
\end{comment}

For an aligned interval (an interval $I$ such that any endpoint is contained in $\Gamma^{*} := \sqrt{|K^*|} \cup \{0\}$), we define an $n$-dimensional rigid analytic polyannulus $A^n_K(I)$ over $K$ with radius in $I$ 
as 
$$A^n_K(I)=\{(t_1,\dots,t_n) \in \mathbb{A}^{n,rig}_K| |t_j|\in I \,\forall \,\, j=1,\dots,n\}.$$ 
If $\Sigma=\prod_{j=1}^n\Sigma_j$ is a subset of $\mathbb{Z}_p^n$ and $\xi :=(\xi_1, \dots,\xi_n)$ an element of $\Sigma$, we denote by  $M_{\xi}$ the log-$\nabla$-module on $A^n_K(I)$ defined by $(\mathcal{O}_{A^n_K(I)}, d+\sum_{j=1}^n\xi_i \textrm{dlog}t_{j})$.

\begin{defi}\label{sigmaunipotence}
Let $Y$ be a smooth rigid analytic space, $I$ an aligned interval and $\Sigma=\prod_{j=1}^{n}\Sigma_j$ $\subset$ $\mathbb{Z}_p^{n}$. We regard that the smooth rigid analytic space $Y \times A_K^n(I)$ is endowed with 
the sections $t_1, \dots, t_n$, which are the coordinates of the polyannulus 
$A_K^n(I)$. We say that a log-$\nabla$-module $E$ on 
$Y\times A^n_K(I)$ is $\Sigma$-unipotent if there exists a filtration 
$$0\subset E_1 \subset \dots \subset E_t=E$$
by subobjects such that every successive quotient $E_i/E_{i-1}$ has the form 
$\pi_1^*{F}\otimes \pi_2^*(M_{\xi})$, where $\pi_1:Y\times A^n_K(I)\rightarrow Y$ denotes the first projection, $\pi_2:Y\times A^n_K(I)\rightarrow A^n_K(I)$ the second, $F$ is a $\nabla$-module on $Y$ and $M_{\xi}$ the log-$\nabla$-module we defined before with $\xi$ $\in$ $\Sigma$. 
%%%We denote by $\mathrm{ULNM}_{Y\times A^n_K(I), \Sigma}$ the category of $\Sigma$-unipotent log-$\nabla$-modules on $Y\times A^n_K(I).$
\end{defi}

\begin{rmk}
As observed in \cite{Shilogext} 1.4, if $I$ does not contain $0,$ the notion of $\Sigma$-unipotence depends only on the image $\overline{\Sigma}$ of $\Sigma$ in $(\mathbb{Z}_p/\mathbb{Z})^n$ in the following sense: any 
log-$\nabla$-module $(E, \nabla)$ in $Y\times A^n_K(I)$ is $\Sigma$-unipotent if and only if for any section $\tau=\prod_{j=1}^n\tau_j:(\mathbb{Z}_p/\mathbb{Z})^n\rightarrow (\mathbb{Z}_p)^n$ of the canonical projection is $\tau(\overline{\Sigma})$-unipotent. So, in the following, we often call the $\Sigma$-unipotence as 
the $\overline{\Sigma}$-unipotence. 
\end{rmk}
As in definition 1.8 of \cite{Shilogext} we can give following definition.
\begin{defi}
A set $\Sigma$ $\subset$ $\mathbb{Z}_p$ is called (NID) (resp. (NLD)) if for any $\alpha$, $\beta$ $\in$ $\Sigma$, $\alpha-\beta$ is not a non zero integer (resp. is p-adically non-Liouville). A set $\Sigma=\prod_{j=1}^{n}\Sigma_{j}$ $\subset$ $\mathbb{Z}_p^{n}$ is called (NID) (resp. (NLD)) if 
$\Sigma_{j}$ is (NID) (resp. (NLD)) for any $j=1,\dots,n$. \par 
A set $\Sigma$ $\subset$ $\mathbb{Z}_p/\mathbb{Z}$ is called (NLD) if, 
for any section $\tau: \mathbb{Z}_p/\mathbb{Z} \rightarrow \mathbb{Z}_p$ 
of the projection and for any $\alpha$, $\beta$ $\in$ $\Sigma$, 
$\alpha-\beta$ is $p$-adically non-Liouville. 
A set $\Sigma=\prod_{j=1}^{n}\Sigma_{j}$ $\subset$ 
$(\mathbb{Z}_p/\mathbb{Z})^n$ is (NLD) if 
$\Sigma_{j}$ is (NLD) for any $j=1,\dots,n$. 
\end{defi}

\begin{comment}
We  use the following result (\cite{Ked07} 3.3.4, \cite{Ked07} 3.3.6, \cite{Shilogext} corollary 1.15 and \cite{Shilogext} corollary 1.16). 
\begin{teo}\label{unipotentiequivalenti}
Let $Y$ be as before, $I$ a quasi open interval (open at any non-zero endpoint) and $\Sigma=\prod_{j=1}^{n}\Sigma_{j}$ $\subset$ $\bar{K}^{n}$ that is (NID) and (NLD), then  we have an equivalence of categories 
$$\mathcal{U}_I:\mathrm{ULNM}_{Y\times A^n_K([0,0]),\Sigma}\rightarrow \mathrm{ULNM}_{Y\times A^n_K(I),\Sigma}, $$
where $Y\times A^n_K([0,0])$ denotes the smooth rigid analytic space 
$Y$ endowed with the sections $0, ..., 0$ ($n$ times), and 
$\mathcal{U}_I$ is the functor which associates to a $\Sigma$-unipotent log-$\nabla$-module $E$ on $Y\times A^n_K([0,0])$ a $\Sigma$-unipotent log-$\nabla$-module $\mathcal{U}_I(E)$ on $Y\times A^n_K(I)$ defined as the sheaf $\pi^*E$ (where $\pi_1: Y \times A^n_K(I)\rightarrow Y$ denotes the projection) and the connection 
$$v\mapsto \pi_1^*(\nabla)v+\sum_{i=1}^n\pi_1^*(t_i\frac{\partial}{\partial t_i})(v)\otimes \frac{dt_i}{t_i}.$$
If $I$ is an interval of positive length which is not necessarily quasi open, 
the functor $\mathcal{U}_I$ is fully faithful.
\end{teo}  
\end{comment}

Now we recall the theorem of logarithmic extension for log overconvergent 
isocrystals in semistable situation proven in theorem 5, section 12 of \cite{DiP12}, which generalizes of the main theorem of \cite{Shilogext}. We start recalling the notion of log overconvergent isocrystal with $\Sigma$-unipotent monodromy, and the notion of log convergent isocrystal with exponents in $\Sigma$.

We consider $I^{\dag}(U_k/V)^{\log}$, the category of overconvergent log isocrystals on the log pair $((U_k,M),(X_k,M))$ over $(\Spf (V),N)$, as defined in section 4 in \cite{Shi07b}. (See section 10 in \cite{DiP12} for the definition in the present situation. Note that we use simpler notation in this paper to lighten the notation.) In the present situation, it is defined 
as a coherent $j^{\dagger}\mathcal{O}_{\hat{X}_K}$-module endowed with 
a stratification on the log tubular neighborhood 
$]X[_{(\hat{X},M) \times_{(\Spf (V),N)} (\hat{X},M)}^{\log}$ satisfying 
a certain cocycle condition. 
Given an object $\mathcal{E}$ of $I^{\dag}(U_k/V)^{\log}$ and a strict neighborhood $W$ of $]U_k[_{\hat{X}}$ in $]X_k[_{\hat{X}} = \hat{X}_K$ 
which is small enough, $\mathcal{E}$ induces 
a $\nabla$-module $(E,\nabla)$ on $W$.
(This follows from the fact that the log structure $M$ on $\hat{U}$ comes only from the special fiber $U_k$ of $U$, not from $\hat{D}$.)  So $E$ is locally free, and this implies that 
$\mathcal{E}$ is automatically a locally free 
$j^{\dagger}\mathcal{O}_{\hat{X}_K}$-module of finite rank. 
%%a module $E$ with an integrable connection 
%%$$\nabla: E\rightarrow E\otimes \omega^1_{W/K}, $$
%%where $\omega^1_{W/K}$ is the restriction of $K\otimes \omega^1_{(\hat{X},M)/(%%\Spf(V),N)}$ to $W$.

If \'etale locally we are in the situation described in (\ref{etale locally for}), $W$ contains $\hat{D}_{j,i;K}^{\circ}\times A^1_K([\lambda,1))$ for all $i, j $ and for some $\lambda$ $\in $ $(0,1)\cap \Gamma^{*}$ and so we can 
define the restriction of $(E,\nabla)$ on it, which is 
a log-$\nabla$-module on 
$\hat{D}_{j,i;K}^{\circ}\times A^1_K([\lambda,1))$ (for the reason that 
we can put the term `log-' here, see remark \ref{log-nonlog}). 

Let $\overline{\Sigma}=\prod_{h=1}^t \overline{\Sigma}_h$ be a subset of $(\mathbb{Z}_p/\mathbb{Z})^t$ which is (NLD), where $t$ is the number of irreducible components of 
$D=\bigcup_{h=1}^t D^h$. If there exists a small enough \'etale covering $\coprod_l\phi_l:\coprod_l \hat{X}_l \rightarrow \hat{X}$ such that every $\hat{X}_l$ has a diagram 
\begin{equation}\label{etale locally for l}
\xymatrix{ 
\ \hat{D}_l=\bigcup_{j=1}^{s}\hat{D}_{j,l}\ \ar@{^(->}[r] \ar[d] &\ \hat{X}_l \ar[d]  \\  
\ \bigcup_{j=1}^s\{y_{j,l}=0\}\  \ar@{^(->}[r]& \  \Spf V\{x_{1,l},\dots,x_{n,l},y_{1,l},\dots,y_{m,l}\}/(x_{1,l}\dots x_{r,l}-\pi)   \\ 
}  
\end{equation}
as in (\ref{etale locally for}), then we can define a function of sets $h: \{1,\dots,s\}\rightarrow \{1,\dots,t\}$ for each $l$ as follows: 
for any $1\leq j\leq s$, $\phi_l(\hat{D}_{j,l})$ is contained in 
one irreducible component $D^{h(j)}$ of $D$. 
We denote by $\overline{\Sigma}_{h(j)}$ the factor of $\overline{\Sigma}$ corresponding to the component $D^{h(j)}$.
\begin{defi}\label{unipotentmonodromy}
A log overconvergent isocrystal $\mathcal{E}$ has $\overline{\Sigma}$-unipotent monodromy if there exists an \'etale covering $\coprod_l\phi_l:\coprod_l \hat{X}_l \rightarrow \hat{X}$ such that every $\hat{X}_l$ has a diagram
(\ref{etale locally for l}) 
with $\hat{D}_l =\phi_l^{-1}(\hat{D})$ and the restriction of the $\nabla$-module %% $E_{\mathcal{E},l}$ 
associated to $\mathcal{E}$ to 
$\hat{D}^{\circ}_{i,j,l;K}\times A^1_K([\lambda,1))$ 
is $\overline{\Sigma}_{h(j)}$-unipotent for all $l$, $i$ and $j$. 
%% for which $\phi_l(\hat{D}_{i,j,l}^{\circ})\subset \hat{D}^{h(i,j)}$. 
We denote by $I^{\dag}(U_k/V)^{\log, \overline{\Sigma}}$ 
the category of log overconvergent isocrystals with 
$\overline{\Sigma}$-unipotent monodromy .
\end{defi}
\begin{rmk}
Here we used a slightly different formulation from 
\cite{DiP12}: $t$ there was the number of irreducible components of 
$\hat{D}$, and the condition `$\overline{\Sigma}_{h(j)}$-unipotent' here was 
`$\cap_{i=1}^r {\Sigma}_{h(i,j)}$-unipotent' in the notation there. 
However, this does not make any serious difference. 
\end{rmk}

The notion of $\overline{\Sigma}$-unipotent monodromy is independent of the choice of the \'etale covering and the diagram 
(\ref{etale locally for l}) which are chosen in definition \ref{unipotentmonodromy} (proposition 17 of section 12 of \cite{DiP12}).

We also consider the category of locally free log convergent isocrystals as defined in \cite{Shi00} section 5. (See sections 4 and 8 in \cite{DiP12} for the discussion in the present situation.) 
It is the category of locally free isocrystals on the log convergent site $((X_k, M)/(\Spf (V),N))_{\mathrm{conv}}$; we denote it by 
$I_{\mathrm{conv}}(X_k/V)^{\log}.$  In the local situation as in (\ref{etale locally for}), $\mathcal{E}\in I_{\mathrm{conv}}(X_k/V)^{\log}$ induces a log-$\nabla$-module on $\hat{X}_K$ with respect to $y_1,\dots y_s.$ 

Let $\overline{\Sigma}=\prod_{h=1}^t \overline{\Sigma}_h$ be a subset of $(\mathbb{Z}_p/\mathbb{Z})^t$, where $t$ is the number of the irreducible components of $D=\bigcup_{h=1}^tD^h$ and let $\tau:\mathbb{Z}_p/\mathbb{Z}\rightarrow \mathbb{Z}_p$ be a section of the canonical surjection $\mathbb{Z}_p \rightarrow \mathbb{Z}_p/\mathbb{Z}$. 

\begin{defi}\label{residueanalytic} 
A locally free convergent isocrystal $\mathcal{E}$ has exponents 
%% along $\hat{D}_K$ 
in $\tau({\overline{\Sigma}}) := \prod_{h=1}^t \tau(\overline{\Sigma}_h)$ 
if there exists an \'etale covering $\coprod_l\phi_l:\coprod_l \hat{X}_l \rightarrow \hat{X}$ such that every $\hat{X}_l$ has a diagram 
\eqref{etale locally for l} with $\hat{D}_l:=\phi_l^{-1}(\hat{D})$ and the log-$\nabla$-module $E_{\mathcal{E},l}$ on $\hat{X}_{l;K}$ induced by $\mathcal{E}$ has exponents 
in $\tau(\overline{\Sigma}_{h(j)})$ along $\hat{D}_{j,l;K}$ for all $l$ and 
$j$. We denote by $I_{\mathrm{conv}}(X_k/V)^{\log,\tau(\overline{\Sigma})}$ the category of locally free log convergent isocrystals with exponents in $\tau(\overline{\Sigma})$.
\end{defi}
The notion of exponents in $\tau({\overline{\Sigma}})$ is independent of the choice of the \'etale covering and the diagram (\ref{etale locally for l}), which are chosen in definition \ref{residueanalytic} (lemma 2 of section 8 of \cite{DiP12}). \\

\begin{defi}\label{NLG}
Let $\overline{\Sigma}=\prod_{h=1}^t \overline{\Sigma}_h$ be a subset of $(\mathbb{Z}_p/\mathbb{Z})^t$. We say that $\overline{\Sigma}$ is (NLG) if it is a subgroup of $(\mathbb{Z}_p/\mathbb{Z})^t$ and 
if for any section $\tau:\mathbb{Z}_p/\mathbb{Z}\rightarrow \mathbb{Z}_p$ the set $\tau(\overline{\Sigma}):=\prod_{h=1}^t\tau(\overline{\Sigma}_h)$ consists of elements that are, component by component, $p$-adic non-Liouville numbers.
\end{defi}

Let us remark that if $\overline{\Sigma}$ is (NLG) and if 
$\tau:\mathbb{Z}_p/\mathbb{Z}\rightarrow \mathbb{Z}_p$ is a section, 
$\tau(\overline{\Sigma})$ is (NID) and (NLD). 
Hence, as in remark 3.17 of \cite{Shilogext}, 
theorem 5 of section 12 of \cite{DiP12} implies the following theorem: 

\begin{teo}\label{mainstable}
We fix a set $\overline{\Sigma}=\prod_{h=1}^t\overline{\Sigma}_h$ $\in$ $(\mathbb{Z}_p/\mathbb{Z})^t$, where $t$ is the number of the irreducible components of $\hat{D}=\bigcup_{h=1}^t\hat{D}^t$ in $\hat{X}$ and we require that $\overline{\Sigma}$ is (NLG). Then, for each section $\tau: \mathbb{Z}_p/\mathbb{Z}\rightarrow \mathbb{Z}_p$, the restriction functor
$$j_{\tau}^{\dag}:I_{\mathrm{conv}}(X_k/V)^{\log, \tau(\overline{\Sigma})}\longrightarrow I^{\dag}(U_k/V)^{\log, \overline{\Sigma}}, $$
which is induced by the tensor functor 
$$j^{\dag}:I_{\mathrm{conv}}(X_k/V)^{\log}
\longrightarrow I^{\dag}(U_k/V)^{\log}, $$ 
is a well-defined functor which is an equivalence of categories.
\end{teo}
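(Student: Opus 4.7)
The plan is to deduce the theorem from theorem 5 of section 12 of \cite{DiP12}, which states that for any $\Sigma=\prod_{h=1}^t\Sigma_h\subset\mathbb{Z}_p^t$ that is (NID) and (NLD), the restriction functor $j^{\dag}$ induces an equivalence
\[
I_{\mathrm{conv}}(X_k/V)^{\log,\Sigma}\longrightarrow I^{\dag}(U_k/V)^{\log,\Sigma}.
\]
I would apply this with $\Sigma=\tau(\overline{\Sigma})$, and then identify the target with $I^{\dag}(U_k/V)^{\log,\overline{\Sigma}}$.

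First I would check that the (NLG) hypothesis on $\overline{\Sigma}$ forces $\tau(\overline{\Sigma})$ to be both (NID) and (NLD). For (NID), two distinct elements of $\tau(\overline{\Sigma}_h)$ project to distinct classes in $\mathbb{Z}_p/\mathbb{Z}$, so their difference cannot lie in $\mathbb{Z}\setminus\{0\}$; (NLD) is precisely the content of Definition \ref{NLG}, as remarked right after that definition. Hence theorem 5 of section 12 of \cite{DiP12} applies and yields, for each $\tau$, an equivalence
\[
I_{\mathrm{conv}}(X_k/V)^{\log,\tau(\overline{\Sigma})}\longrightarrow I^{\dag}(U_k/V)^{\log,\tau(\overline{\Sigma})}.
\]

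Next I would identify the target category with $I^{\dag}(U_k/V)^{\log,\overline{\Sigma}}$. The relevant restriction of an object of $I^{\dag}(U_k/V)^{\log}$ to the polyannulus $\hat{D}^{\circ}_{i,j,l;K}\times A^1_K([\lambda,1))$ uses the interval $[\lambda,1)$, which does not contain $0$. The remark following Definition \ref{sigmaunipotence} then tells us that, for such a log-$\nabla$-module, $\tau(\overline{\Sigma}_{h(j)})$-unipotence is equivalent to $\overline{\Sigma}_{h(j)}$-unipotence, and this property is independent of the chosen section $\tau$. Comparing Definitions \ref{unipotentmonodromy} and \ref{residueanalytic} through this observation shows that the two categories coincide on the nose, so the equivalence above is precisely the functor $j_{\tau}^{\dag}$. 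The well-definedness of $j_{\tau}^{\dag}$, namely that $j^{\dag}\mathcal{E}$ has $\overline{\Sigma}$-unipotent monodromy whenever $\mathcal{E}$ has exponents in $\tau(\overline{\Sigma})$, is absorbed into the same identification: the log-$\nabla$-module on the polyannulus is $\tau(\overline{\Sigma}_{h(j)})$-unipotent by the (NID)+(NLD) unipotence theorem of Christol--Mebkhout--Kedlaya type.

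The only real obstacle is bookkeeping: matching the local indexing of the branches of $\hat D$ used in \cite{DiP12} with the global indexing $h\in\{1,\dots,t\}$ used here via the function $h(j)$, as already flagged in the remark after Definition \ref{unipotentmonodromy}. Once that translation is made, the theorem is a formal consequence of the cited equivalence from \cite{DiP12} together with the $\tau$-independence of $\overline{\Sigma}$-unipotence recorded earlier, and no new $p$-adic analytic input beyond what was used in \cite{DiP12} and \cite{Shilogext} is required.
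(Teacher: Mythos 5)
Your proposal is correct and follows essentially the same route as the paper: the paper likewise observes that (NLG) forces $\tau(\overline{\Sigma})$ to be (NID) and (NLD) (injectivity of the section giving (NID)), and then invokes theorem 5 of section 12 of \cite{DiP12} together with the $\tau$-independence of $\overline{\Sigma}$-unipotence (remark 1.4 of \cite{Shilogext}) to identify the target category. Your write-up simply makes explicit the bookkeeping that the paper leaves implicit.
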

\begin{defi}\label{expoperE}
Let the hypothesis be as in theorem \ref{mainstable} and let $\mathcal{E} \in I^{\dag}(U_k/V)^{\log, \overline{\Sigma}}$. 
We define $\overline{\Sigma}_{\mathcal{E}} \subset \overline{\Sigma}$ as 
the minimum subset of the form $\overline{\Sigma}_{\mathcal{E}}=\prod_{h=1}^t\overline{\Sigma}_{\mathcal{E},h}$ such that $\mathcal{E}$ has 
$\overline{\Sigma}_{\mathcal{E}}$-unipotent monodromy. 
%there exists an \'etale covering $\coprod_l\phi_l:\coprod_l \hat{X}_l \rightarrow \hat{X}$ as in definition \ref{unipotentmonodromy} such that the restriction of the log-$\nabla$-module $E_{\mathcal{E},l}$ on $\hat{X}_{l;K}$ to $ \hat{D}^{\circ}_{i,j,l;K}\times A^1_K[\lambda,1)$ is $\cap_{i=1}^r\overline{\Sigma}_{h(i,j)}$-unipotent, for all $i$ and $j$ for which $\phi_l(\hat{D}_{i,j,l}^{\circ})\subset \hat{D}^{h(i,j)}$. We define $\overline{\Sigma}_{\mathcal{E}}=\prod_{h=1}^t\overline{\Sigma}_{\mathcal{E},h}\subset (\mathbb{Z}_p/\mathbb{Z})^t$ the subset of $\overline{\Sigma}$ consisting of the elements $\xi=(\xi_1, \dots, \xi_t)$ such that for every $\xi_h$ there exists $l, i, j$ such that the log-$\nabla$-module $M_{\xi_h}$ appears in the successive quotients of the filtration which makes $E_{\mathcal{E},l}$ on $ \hat{D}^{\circ}_{i,j,l;K}\times A^1_K[\lambda,1)$ a log-$\nabla$-module $\cap_{i=1}^r\overline{\Sigma}_{h(i,j)}$-unipotent. 
\end{defi}
\begin{rmk}
For a given log overconvergent isocrystal $\mathcal{E}$ the set $\overline{\Sigma}_{\mathcal{E}}$ is a finite set. Indeed, we can suppose that the \'etale covering $\coprod_l\phi_l:\coprod_l \hat{X}_l \rightarrow \hat{X}$ of definition \ref{unipotentmonodromy} is finite, hence we have a finite number of annuli 
$ \hat{D}^{\circ}_{i,j,l;K}\times A^1_K([\lambda,1))$, and a finite number of $M_{\xi}$'s appears in the expression in definition 
\ref{sigmaunipotence} for the log-$\nabla$-module on each 
$\hat{D}^{\circ}_{i,j,l;K}\times A^1_K([\lambda,1))$ induced by $\mathcal{E}$. 
\end{rmk}

\begin{defi}
Let $\Sigma_1:=\prod_{j=1}^n\Sigma_{1,j}$ and $\Sigma_2 :=\prod_{j=1}^n\Sigma_{2,j}$ be subsets of $\mathbb{Z}_p^n$; we denote the set
$$\prod_{j=1}^n\{\xi_{1,j}\pm\xi_{2,j}|\, \xi_{1,j} \in \Sigma_{1,j}\,,\, \xi_{2,j} \in \Sigma_{2,j} \}$$
by $\Sigma_1\pm\Sigma_2$. 
For $\boldsymbol{\alpha}=(\alpha_1, \dots, \alpha_n) \in \mathbb{Z}_p^n$, we 
put $$\Sigma \pm \boldsymbol \alpha=:\prod_{j=1}^n\{ \xi_{j}\pm \alpha_j|\, \xi_{j} \in \Sigma_{j} \}.$$
\end{defi}

We see that $I^{\dag}(U_k/V)^{\log, \overline{\Sigma}}$ is a rigid abelian tensor category under convenable hypothesis. 
\begin{prop}\label{abelian tensor}
If $\overline{\Sigma}$ is (NLG), the category $I^{\dag}(U_k/V)^{\log, \overline{\Sigma}}$ is a rigid abelian tensor category.
\end{prop}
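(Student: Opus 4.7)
The plan is to show that the full subcategory $I^{\dag}(U_k/V)^{\log, \overline{\Sigma}}$ of the rigid abelian tensor category $I^{\dag}(U_k/V)^{\log}$ is closed under kernels, cokernels, tensor products, and duals, and contains the unit object. Since $\overline{\Sigma}$-unipotent monodromy is a condition imposed on the restriction of the induced $\nabla$-module to each annulus $\hat{D}^{\circ}_{i,j,l;K} \times A^{1}_{K}([\lambda,1))$ (definition \ref{unipotentmonodromy}), and the operations in $I^{\dag}(U_k/V)^{\log}$ commute with this restriction, everything reduces to verifying the corresponding stability properties in the category of $\overline{\Sigma}$-unipotent log-$\nabla$-modules on such an annulus, in the sense of definition \ref{sigmaunipotence}.

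For the tensor product, dual, and unit object, the argument is formal and relies on the group structure of $\overline{\Sigma}$. If $E$ and $E'$ are $\overline{\Sigma}$-unipotent with filtrations having graded pieces of the form $\pi_{1}^{*}F_{i}\otimes\pi_{2}^{*}M_{\xi_{i}}$ and $\pi_{1}^{*}F'_{j}\otimes\pi_{2}^{*}M_{\xi'_{j}}$, then the standard tensor-product filtration on $E\otimes E'$ has graded pieces $\pi_{1}^{*}(F_{i}\otimes F'_{j})\otimes\pi_{2}^{*}M_{\xi_{i}+\xi'_{j}}$, and the class $\overline{\xi_{i}+\xi'_{j}}$ lies in $\overline{\Sigma}+\overline{\Sigma}=\overline{\Sigma}$ because $\overline{\Sigma}$ is a subgroup of $(\mathbb{Z}_{p}/\mathbb{Z})^{t}$. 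The dual filtration yields graded pieces $\pi_{1}^{*}F_{i}^{\vee}\otimes\pi_{2}^{*}M_{-\xi_{i}}$, with $-\overline{\Sigma}=\overline{\Sigma}$. The unit object is locally the trivial log-$\nabla$-module $\pi_{1}^{*}\mathcal{O}_{Y}=\pi_{1}^{*}\mathcal{O}_{Y}\otimes\pi_{2}^{*}M_{0}$, and $0\in\overline{\Sigma}$ since $\overline{\Sigma}$ is a group.

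The main obstacle is closure under subobjects and quotients, which together yields abelianness within the ambient abelian category. Given an injection $G\hookrightarrow E$ locally with $E$ carrying the $\overline{\Sigma}$-unipotent filtration $\{E_{i}\}$, set $G_{i}:=G\cap E_{i}$; this is a filtration of $G$ whose graded pieces $G_{i}/G_{i-1}$ embed into $E_{i}/E_{i-1}=\pi_{1}^{*}F_{i}\otimes\pi_{2}^{*}M_{\xi_{i}}$. After twisting by $\pi_{2}^{*}M_{-\xi_{i}}$, the crux is to show that any sub-log-$\nabla$-module of $\pi_{1}^{*}F_{i}$ on $Y\times A^{1}_{K}([\lambda,1))$ is itself of the form $\pi_{1}^{*}F'_{i}$ for some sub-$\nabla$-module $F'_{i}\subseteq F_{i}$. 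Under the (NLG) hypothesis, $\tau(\overline{\Sigma})$ is (NID) and (NLD) for every section $\tau$, and this claim is a consequence of Kedlaya's $p$-adic local monodromy theory \cite{Ked07} together with the extension and descent results developed in \cite{Shilogext} and \cite{Shiparalog}: the pullback $\pi_{1}^{*}$ is an equivalence between the category of $\nabla$-modules on $Y$ and the category of $\{0\}$-unipotent log-$\nabla$-modules on the annulus, so in particular it is stable under the formation of subobjects. A dual argument handles quotients, completing the proof that $I^{\dag}(U_k/V)^{\log, \overline{\Sigma}}$ is a rigid abelian tensor category.
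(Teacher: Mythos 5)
Your overall strategy coincides with the paper's: reduce everything to the closure properties of $\overline{\Sigma}$-unipotent log-$\nabla$-modules on the local annuli $\hat{D}^{\circ}_{i,j,l;K}\times A^1_K([\lambda,1))$, handle tensor products, duals and the unit via $M_{\xi}\otimes M_{\xi'}=M_{\xi+\xi'}$, $M_{\xi}^{*}=M_{-\xi}$ and the subgroup hypothesis in (NLG), and then deal with subquotients. The first two parts of your argument are correct and essentially identical to the paper's.

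The gap is in the subquotient step, which is the only part with real mathematical content. You reduce it (correctly) to the claim that a sub-log-$\nabla$-module $H$ of $\pi_1^{*}F_i$ on $Y\times A^1_K([\lambda,1))$ is of the form $\pi_1^{*}F'_i$, but the justification you offer --- ``$\pi_1^{*}$ is an equivalence between $\nabla$-modules on $Y$ and $\{0\}$-unipotent log-$\nabla$-modules on the annulus, so in particular it is stable under the formation of subobjects'' --- does not work. First, the essential image of $\pi_1^{*}$ is not the full subcategory of $\{0\}$-unipotent objects: those are iterated extensions of pullbacks, not pullbacks (the relevant equivalence is between unipotent objects on $Y\times A^1_K([0,0])$ and unipotent objects on the annulus). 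Second, and more importantly, even granting an equivalence of $\pi_1^{*}$ onto some full subcategory, this says nothing about whether a subobject of $\pi_1^{*}F_i$ \emph{computed in the ambient category of all log-$\nabla$-modules on the annulus} again lies in that subcategory --- that is precisely the statement to be proved, so the argument is circular. The paper sidesteps this by citing proposition 1.17 of \cite{Shilogext} for the closure of $\Sigma$-unipotence under subquotients; the proof of that proposition (building on Kedlaya's analysis in \cite{Ked07} of unipotent $\nabla$-modules on relative annuli, via vanishing/structure results for horizontal sections after twisting, under the (NID) and (NLD) hypotheses) is exactly the input your sketch is missing. To repair your proof, either cite that proposition directly or reproduce its argument for the key claim about subobjects of $\pi_1^{*}F_i$.
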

\begin{proof}
First, the category $I^{\dag}(U_k/V)^{\log}$ is an abelian tensor category having internal hom's; this can be seen easily from definition (of an object as a coherent module endowed with stratification). 
Moreover, using the fact that the objects are locally free, 
one can see that it is rigid. 

Hence it suffices to prove that the category $I^{\dag}(U_k/V)^{\log, \overline{\Sigma}}$ is closed by the formation of subquotient, tensor product and dual in $I^{\dag}(U_k/V)^{\log}$. 

Since the definition of $\overline{\Sigma}$-unipotent monodromy is given \'etale locally, it is sufficient to check these properties for the $\overline{\Sigma}$-unipotence of log-$\nabla$-modules on $\hat{D}^{\circ}_{i,j,l;K}\times A^1_K([\lambda,1))$ with the same notation as in definition \ref{unipotentmonodromy}. 

The notion of $\overline{\Sigma}$-unipotence is closed by the formation of subquotients by proposition 1.17 of \cite{Shilogext}. So we need to prove that the notion of $\overline{\Sigma}$-unipotence is closed by the formation of tensor product and dual. One can check it easily by using the isomorphisms 
$M_{\xi}\otimes M_{\xi'}=M_{\xi+\xi'}$, $M_{\xi}^* = M_{-\xi}$ and the assumption that $\overline{\Sigma}$ is a subgroup of $(\mathbb{Z}_p/\mathbb{Z})^t$.

\end{proof}

\section{Formal and algebraic modules with integrable connection}

In this section, we recall the definition of the category of 
modules with integrable connection on (formal log) schemes. 
First we recall the definition in the case of formal (log) schemes. 

\begin{defi}
We define the category $\widehat{MIC}(\hat{U}/V)$ of 
modules with integrable connection on 
$\hat{U}$ over $V$ as follows: An object 
is an isocoherent sheaf $E$ 
(the sheaf of the form $K \otimes_V \tilde{E}$, where $\tilde{E}$ is a 
coherent $\mathcal{O}_{\hat{U}}$-module) 
endowed with an integrable connection
$$\nabla:E\rightarrow E\otimes \Omega^1_{\hat{U}/V}, $$
where $\Omega^1_{\hat{U}/V}$ is the sheaf of continuous one forms on 
$\hat{U}$ over $V$. 
It is known that $E$ is automatically `locally free' (a projective 
$K \otimes_V \mathcal{O}_{\hat{U}}$-module locally in Zariski topology). 
\end{defi}

\begin{defi}
We define the category $\widehat{MIC}(\hat{X}/V)^{\log}$ of 
modules with integrable connection on 
$(\hat{X},M)$ over $(\Spf (V), N)$ as follows: An object in 
$\widehat{MIC}(\hat{X}/V)^{\log}$ 
is a locally free (in the sense of the above definition) 
isocoherent sheaf $E$ 
%(the sheaf of the form $K \otimes_V \tilde{E}$, where $\tilde{E}$ is a 
%coherent $\mathcal{O}_{\hat{X}}$-module, which is a projective 
%$K \otimes_V \mathcal{O}_{\hat{X}}$-module locally in Zariski topology) 
endowed with an integrable connection
$$\nabla:E\rightarrow E\otimes \omega^1_{(\hat{X}, M)/(V,N)}, $$
where $\omega^1_{(\hat{X}, M)/(V,N)}$ is the sheaf of continuous 
one forms of $(X,M)$ over $(\Spf (V),N)$ 
in the framework of fine log formal schemes. 
\end{defi}

Given $(E, \nabla) \in \widehat{MIC}(\hat{X}/V)^{\log}$ and 
given a diagram (\ref{etale locally for}), we can define the exponents of 
$(E,\nabla)$ along each $\hat{D}_{j}$: the definition is perfectly analogous to the one given for log-$\nabla$-modules on rigid analytic spaces (definition \ref{analyticexponents}). We can globalize the definition of exponents as in definition \ref{residueanalytic} in the following way. 

\begin{defi}
We fix a set $\Sigma=\prod_{h=1}^t \Sigma_h \subset \bar{K}^t$, 
where $t$ is the number of irreducible components of $D = \cup_{h=1}^t D^h$. 
We say that an object $(E,\nabla)$ 
in $\widehat{MIC}(\hat{X}/V)^{\log}$ has exponents in $\Sigma$ 
if there exists an \'etale covering $\coprod_l\phi_{l}: \coprod_l \hat{X}_{l} \rightarrow \hat{X}$ such that every $\hat{X}_{l}$ has a diagram
\eqref{etale locally for l}
with $\hat{D}_{l} = \phi_l^{-1}(\hat{D})$ 
and for every $l$ and $j$, the restriction of 
$(E,\nabla)$ to $\hat{X}_{l}$ 
has exponents in $\Sigma_{h(j)}$ along $\hat{D}_{j,l}$, where 
$\phi_{l}(\hat{D}_{j,l}) \subset D^{h(j)}$. 
We denote by $\widehat{MIC}(\hat{X}/V)^{\log, \Sigma}$ 
the category of 
modules with integrable connection on 
$(\hat{X},M)$ over $(\Spf (V), N)$ with exponents in $\Sigma$. 
\end{defi}

One can prove by arguing as in the analytic case (\cite{DiP12} lemma 2 of section 8) that the above definition does not depend on the choice of the \'etale covering and the diagram (\ref{etale locally for l}). 

\begin{rmk}
If we are 
given a diagram (\ref{etale locally for}), 
the category $\widehat{MIC}(\hat{X}/V)^{\log}$ is naturally 
equivalent to the category of log-$\nabla$-modules on 
$\hat{X}_K$ with respect to $y_1, \dots, y_s$. Under this 
equivalence, the notions of `exponents in $\Sigma$' coincide. 
\end{rmk}

Next we recall the definition in the case of usual (log) schemes. 

\begin{defi}
We define the category $MIC(U_K/K)$ of modules 
with integrable connection on the scheme $U_K$ as follows: 
An object in $MIC(U_K/K)$ is a coherent $\mathcal{O}_{X_K}$-module $E$ 
endowed with an integrable connection
$$\nabla:E\rightarrow E\otimes \Omega^1_{U_K/K}. $$
It is known that $E$ is automatically locally free. 
\end{defi}

\begin{defi}
We define the category $MIC(X_K/K)^{\log}$ of modules with integrable connection on $(X_K,M)$ over $K$ as follows: An object in 
$MIC(X_K/K)^{\log}$ is a locally free $\mathcal{O}_{X_K}$-module $E$ of finite rank endowed with an integrable connection
$$\nabla:E\rightarrow E\otimes \omega^1_{(X_K, M)/K}, $$
where $\omega^1_{(X_K, M)/K}$ is the sheaf of logarithmic one forms 
on $(X_K,M)$ over $K$ in the framework of fine log schemes. 
\end{defi}

As in the formal case, if we are 
given an object $(E, \nabla)$ in $MIC(X_K/K)^{\log}$ and given \'etale locally a diagram as in $K \otimes_V$(\ref{etale locally}), 
we can define the exponents of 
$(E,\nabla)$ along each $D_{j;K} := D_j \otimes_V K$. 
Also, we can globalize the definition of exponents as 
in definition \ref{residueanalytic} in the following way. 

\begin{defi}
We fix a set $\Sigma=\prod_{h=1}^t \Sigma_h \subset \bar{K}^t$, where $t$ is the number of irreducible components of the divisor 
$D_K=\cup_{h=1}^tD^{h;K}$ (which is equal to the number of irreducible 
components of $D$). We say that an object $(E,\nabla)$ 
in $MIC((X_K,M)/K)$ has exponents in $\Sigma$ 
if there exists an \'etale covering $\coprod_l\phi_{l;K}: \coprod_l X_{l;K} \rightarrow X_K$ such that every $X_{l;K}$ has a diagram
\begin{equation}\label{etale locally K}
\xymatrix{ 
\ D_{l;K} = \bigcup_{j=1}^{s} D_{j,l;K}\ \ar@{^(->}[r] \ar[d] &\ X_{l;K} \ar[d]  \\  
\ \bigcup_{j=1}^s\{y_{l,j}=0\}\  \ar@{^(->}[r]& \  \Sp K[x_{l,1},\dots,x_{l,n},y_{l,1},\dots,y_{l,m}]/(x_{l,1}\dots x_{l,r}-\pi)   \\ 
}  
\end{equation}
as in $K \otimes_V$(\ref{etale locally}) with $D_{l;K} := \phi_l^{-1}(D_K)$ and for for every $l$ and $j$, the restriction of 
$(E,\nabla)$ to $X_{l;K}$ 
has exponents in $\Sigma_{h(j)}$ along $D_{j,l;K}$, where 
$\phi_{l;K}(D_{j,l;K})\subset D^{h(j);K}$. We denote by 
$MIC(X_K/K)^{\log, \Sigma}$ the category of modules with integrable connection on $(X_K,M)$ over $K$ with exponents in $\Sigma$.
\end{defi}
Also in this case, one can prove that the above definition does not depend on the choice of the \'etale covering and the diagram (\ref{etale locally K}). \\ 

In \cite{Del70} II, \cite{AndBal01} I.3 and \cite{Bal05} section 2, the notion of regularity along a normal crossing divisor which is union of smooth components is introduced. Since the irreducible components of $D_K$ are not necessarily smooth, we need a slight generalization of this notion. 
\begin{defi} \label{regular local}
We say that an object $(E,\nabla)$ in $MIC(U_K/K)$ 
is regular along $D_K$ if, around every generic point of $D_K$, 
there exists an etale neighborhood 
$X_{l;K} \rightarrow X$ and a diagram as in (\ref{etale locally K})
such that the module with integrable coonection induced on $U_{l; K}:=X_{l; K}\setminus D_{l; K}$ is regular along $D_{l; K}$. 
\end{defi}
\begin{prop}\label{regindip}
The notion of regularity along $D_K$ is independent of the choice of \'etale neighborhood and the diagram chosen in definition \ref{regular local}.
\end{prop}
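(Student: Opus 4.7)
The approach I would take is to reduce the statement to the classical fact that regularity along a normal crossing divisor with smooth components is both preserved and reflected by étale morphisms. Fix a generic point $\eta$ of $D_K$, and suppose we are given two choices of étale neighborhoods $f_i: X_{i;K} \to X_K$ ($i=1,2$) of $\eta$, each admitting a diagram as in (\ref{etale locally K}), with preimages $D_{i;K} := f_i^{-1}(D_K)$ which are normal crossing divisors with smooth components. The goal is to show that the pullback of $(E,\nabla)$ to $X_{1;K}\setminus D_{1;K}$ is regular along $D_{1;K}$ if and only if the pullback to $X_{2;K}\setminus D_{2;K}$ is regular along $D_{2;K}$.

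The first step is to form the fibre product $Y := X_{1;K}\times_{X_K} X_{2;K}$ with its two étale projections $p_i:Y\to X_{i;K}$, and to note that $D_Y := p_1^{-1}(D_{1;K}) = p_2^{-1}(D_{2;K})$ is again a normal crossing divisor with smooth components (since this property is étale-local and holds on each $X_{i;K}$). Moreover, $Y$ is itself an étale neighborhood of $\eta$ over $X_K$, and in a neighborhood of every preimage of $\eta$ in $X_{i;K}$, each projection $p_i$ is surjective; this will be needed for the descent direction.

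The second step is to invoke the classical result, essentially \cite{Del70} II.4 or \cite{AndBal01} I.3, which says that for an étale morphism $g:Z'\to Z$ between smooth $K$-schemes carrying a smooth-component normal crossing divisor $\Delta\subset Z$ to the smooth-component normal crossing divisor $g^{-1}(\Delta)$, a module with integrable connection on $Z\setminus \Delta$ is regular along $\Delta$ if and only if its pullback to $Z'\setminus g^{-1}(\Delta)$ is regular along $g^{-1}(\Delta)$, where the ``if'' direction additionally requires $g$ to be surjective on a neighborhood of every generic point of $\Delta$. Applying this twice to $p_1$ and $p_2$ gives the two equivalences and hence the desired equivalence between the two choices at $\eta$.

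The main obstacle I anticipate is the bookkeeping needed to match components: the smooth components $D_{j,i;K}$ of $D_{i;K}$ may lie over different irreducible (possibly non-smooth) components of $D_K$, and one must check that the classical equivalence of regularity along the $D_{j,i;K}$ translates into an intrinsic statement about regularity along $D_K$ at the given generic point $\eta$. Since regularity is tested at the generic points of the divisor and each generic point of some $D_{j,i;K}$ lies over the corresponding generic point of $D^{h(j);K}$, this causes no real trouble: the equivalence at $\eta$ holds uniformly as $\eta$ ranges over the generic points of $D_K$, which yields the proposition.
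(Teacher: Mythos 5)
Your proposal is correct and follows essentially the same route as the paper: form the fibre product of the two \'etale neighborhoods and apply the \'etale invariance of regularity from \cite{AndBal01} I.3.4.4 twice, once in each direction. The only difference is cosmetic (you take the fibre product over $X_K$ rather than over $U_K$, and you make explicit the surjectivity near generic points needed for the descent direction, which the paper leaves implicit).
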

\begin{proof}
It is essentially proved in \cite{AndBal01} I. 3.4.4, 3.4.5. 
Indeed, let us take another etale neighborhood 
$X'_{l;K} \rightarrow X$ and a diagram as in 
(\ref{etale locally K}) (which we denote by the same symbols with ${}'$). 
We want to prove that the pull-back of $(E,\nabla)$ to $U'_{l; K}:=X'_{l; K}\setminus D'_{l; K}$ is regular along $D'_{l; K}$. If we consider the fiber product $U_{l; K}\times_{U_{K}} U'_{l; K}$, the pull-back of $(E,\nabla)$ to it 
is regular along the pull-back of $D_K$ by 
proposition I. 3.4.4 of \cite{AndBal01}, and it implies the regularity of 
the pull-back of $(E,\nabla)$ to $U'_{l; K}$ again by 
proposition I. 3.4.4 of \cite{AndBal01}. 
%%%Let us suppose that $E$ is regular along $D_K$, hence there exists an \'etale neighborhood $X_{l; K}$ of every generic point of $D_K$ and a diagram analogous to (\ref{etale locally}) such that for every $l$ the module with integrable connection induced on $U_{l; K}:=X_{l; K}\setminus D_{l; K}$ is regular along $D_{l; K}$. We fix a generic point of $D_K$, an  \'etale neighborhood $X_l$ of it and a diagram analogous to (\ref{etale locally}). Using the log extension theorem of André and Baldassarri  (\cite{AndBal01} I. 4.9) we can extend $E$ to $X_{l; K}$ with logarithmic singularities along $D_{l; K}$ with certain exponents along $D_{l; K}$. Since the notion of module with connection with given exponents does not depend on the diagram chosen, then also the notion of regularity does not depend on the diagram chosen. To prove that it is independent of the choice of \'etale neighborhood we pick $X'_l$ with diagram as in (\ref{etale locally}) an other \'etale neighborhood of the geometric point of $D_K$ we chose. 
\end{proof}
There is a notion of exponents for an object of $MIC(U_K/K)$ regular along $D_K$, which is defined in \cite{AndBal01} I. 6.3. As before we need to adapt this notion to our situation. 
\begin{defi}\label{thetabar}
Let $(E, \nabla)$ be an object in $MIC(U_K/K)$ regular along $D_K$. We fix a set $\overline{\Sigma}=\prod_{h=1}^t \overline{\Sigma}_h \subset \bar{K}^t/\mathbb{Z}^t$, where $t$ is the number of irreducible components of the divisor $D_K=\cup_{h=1}^tD^{h;K}$. We say that $E$ has exponents in $\overline{\Sigma}$ if, around every generic point of $D_K$, 
 there exists an \'etale neighborhood $X_{l;K} \rightarrow X_K$ and 
a diagram (\ref{etale locally K}) 
such that the module with integrable connection induced on $U_{l; K}:=X_{l; K}\setminus D_{l; K}$ has exponents in the sense of \cite{AndBal01} I. 6.3 along $D_{j,l;K}$ contained in $\overline{\Sigma}_{h(j)}$ for every $j$, where $\phi_{l;K}(D_{j,l;K})\subset D^{h(j);K}$. We denote the category of modules with integrable connection on $U_K$ regular along $D_K$ with exponents in $\overline{\Sigma}$ by $MIC(U_K/K)^{reg, \overline{\Sigma}}$.
\end{defi} 
\begin{prop}
The notion of exponents in $\overline{\Sigma}$ for a module with integrable connection on $U_K$ regular along $D_K$ is independent of the choice of the \'etale neighborhood and the diagram chosen in 
definition \ref{thetabar}.
\end{prop}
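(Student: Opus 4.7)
The plan is to mimic the proof of Proposition \ref{regindip}, replacing the appeal to the \'etale invariance of regularity (\cite{AndBal01} I.3.4.4) by the corresponding \'etale invariance of exponents modulo $\mathbb{Z}$ (\cite{AndBal01} I.6.3 and the results immediately following it). Since the condition in Definition \ref{thetabar} is required only around generic points of $D_K$, it suffices to argue after localizing at each generic point.

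First, I fix a generic point $\eta$ of an irreducible component $D^{h;K}$ of $D_K$ and suppose that we are given two \'etale neighborhoods $X_{l;K}\to X_K$ and $X'_{l;K}\to X_K$ of $\eta$, each equipped with a diagram as in \eqref{etale locally K}. Let $D_{j,l;K}$ be a component of $D_{l;K}$ with $\phi_{l;K}(D_{j,l;K})\subset D^{h;K}$ passing through (a lift of) $\eta$, and similarly let $D'_{j',l;K}$ be an analogous component of $D'_{l;K}$. I want to show that the exponents of $(E,\nabla)$ along $D_{j,l;K}$ and along $D'_{j',l;K}$ have the same image in $\bar{K}/\mathbb{Z}$, so that the condition $\subset\overline{\Sigma}_h$ in Definition \ref{thetabar} does not depend on the chosen diagram.

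To compare them, I form the fiber product $Y:=X_{l;K}\times_{X_K}X'_{l;K}$, which is again an \'etale neighborhood of $\eta$ in $X_K$, and I restrict to an open piece on which the pull-back $D_Y$ of $D_K$ is a strict normal crossing divisor. The two projections $Y\to X_{l;K}$ and $Y\to X'_{l;K}$ are \'etale, the pull-back of $D_{j,l;K}$ (resp.\ of $D'_{j',l;K}$) is a union of smooth components of $D_Y$, and $(E,\nabla)$ restricted to $U_Y := Y\setminus D_Y$ is regular along $D_Y$ by Proposition \ref{regindip}. The \'etale invariance of the exponents in the sense of \cite{AndBal01} I.6.3 then implies that the exponents of the pull-back of $(E,\nabla)$ to $Y$ along any smooth component of $D_Y$ lying over $D_{j,l;K}$ coincide modulo $\mathbb{Z}$ with the exponents of $(E,\nabla)$ along $D_{j,l;K}$ itself, and analogously for $D'_{j',l;K}$. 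Since the components of $D_Y$ in question all lie over the same generic point $\eta$ of $D^{h;K}$, both sets of exponents agree with the exponents along a common component of $D_Y$, hence with each other.

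The main technical point will be to make sure that the statements from \cite{AndBal01} are applied in the correct generality: their formulation presupposes that the divisor has smooth components, which is why we only obtain a well-defined subset of $\bar K/\mathbb{Z}$ after the \'etale localization of Definition \ref{thetabar}. Once this is checked, the argument above gives the required independence.
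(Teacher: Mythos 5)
Your argument is correct and is essentially the paper's own proof: the paper simply says to proceed as in Proposition \ref{regindip}, i.e.\ pass to the fiber product of the two \'etale neighborhoods and invoke the \'etale invariance of exponents from \cite{AndBal01} (their I.6.4.3, the result just after the definition I.6.3 you cite). Your additional care about restricting to a locus where the pulled-back divisor has smooth components is a reasonable elaboration of the same route, not a different one.
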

\begin{proof}
We can proceed as in proposition \ref{regindip}, using proposition \cite{AndBal01} I.6.4.3.
\end{proof}

If we take a set $\overline{\Sigma}=\prod_{h=1}^t \overline{\Sigma}_h \subset \bar{K}^t/\mathbb{Z}^t$ and a section $\tau:\bar{K}/\mathbb{Z} \rightarrow \bar{K}$ of the canonical projection, the log extension theorem of Andr\'e and Baldassarri (\cite{AndBal01} I. 4.9) reads as follows.
\begin{teo}\label{P}
Given $\overline{\Sigma}=\prod_{h=1}^t \overline{\Sigma}_h \subset (\bar{K}/\mathbb{Z})^t$, where $t$ is the number of irreducible components of the divisor $D_K=\cup_{h=1}^tD^{h;K}$. For every  section $\tau:\bar{K}/\mathbb{Z}\rightarrow \bar{K}$, the restriction functor
$$ P_{\tau}: MIC(X_K/K)^{\log, \tau(\overline\Sigma)}\longrightarrow MIC(U_K/K)^{reg, \overline{\Sigma}}, $$
which is induced by the tensor functor 
$$ P: MIC(X_K/K)^{\log}\longrightarrow MIC(U_K/K)^{reg}, $$
is a well-defined functor which is 
an equivalence of categories, where $\tau(\overline\Sigma) := 
\prod_{h=1}^t \tau(\overline{\Sigma}_h)$. 
\end{teo}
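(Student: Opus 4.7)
The plan is to reduce Theorem \ref{P} to the corresponding result of André–Baldassarri (\cite{AndBal01} I.4.9), which treats the smooth case in which the irreducible components of the boundary divisor are assumed smooth. In our semistable situation, the global components $D^{h;K}$ of $D_K$ need not be smooth, but étale locally on $X_K$ the diagram (\ref{etale locally K}) realizes the pullback $D_{l;K}$ as a normal crossing divisor with smooth components $D_{j,l;K}$, so the theorem of \cite{AndBal01} applies on each étale chart. The section $\tau:\bar{K}/\mathbb{Z}\to \bar{K}$ determines, via $\tau(\overline{\Sigma}_{h(j)})$, the prescribed exponents along each $D_{j,l;K}$.

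First I would verify that $P_{\tau}$ is well-defined: given $(E,\nabla)\in MIC(X_K/K)^{\log,\tau(\overline{\Sigma})}$, the restriction $P(E,\nabla)$ to $U_K$ is, on every étale chart $X_{l;K}\to X_K$, a module with integrable connection on $U_{l;K}$ that admits a log extension to $X_{l;K}$ with exponents in $\tau(\overline{\Sigma}_{h(j)})$ along $D_{j,l;K}$. By \cite{AndBal01} I.4.9 and I.6.3 this restriction is regular along $D_{l;K}$ with exponents in $\overline{\Sigma}_{h(j)}$ (mod $\mathbb{Z}^t$); since regularity and exponents in $\overline{\Sigma}$ are independent of the choice of étale chart by the propositions established earlier, $P_\tau$ lands in $MIC(U_K/K)^{reg,\overline{\Sigma}}$.

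Next I would construct a quasi-inverse. Given $(F,\nabla)\in MIC(U_K/K)^{reg,\overline{\Sigma}}$ and an étale covering $\coprod_l X_{l;K}\to X_K$ witnessing regularity with exponents in $\overline{\Sigma}$ (such a covering exists around every generic point of $D_K$ and we may pass to the union of such charts with a covering of $U_K$), the theorem of André–Baldassarri provides a \emph{canonical} log extension $(\widetilde{F}_l,\widetilde{\nabla}_l)$ of the pullback to $X_{l;K}$ with exponents in $\tau(\overline{\Sigma}_{h(j)})$ along each $D_{j,l;K}$, and this extension is functorial in $(F,\nabla)$. Applying the uniqueness to the two maps of the double fiber product $X_{l;K}\times_{X_K}X_{l';K}\to X_{l;K}$ and $\to X_{l';K}$, the local extensions satisfy an étale descent datum. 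Since $MIC^{\log}$ is a stack for the étale topology on $(X_K,M)$ (coherent modules with integrable log connection descend), the local extensions glue to give an object $(\widetilde{F},\widetilde{\nabla})\in MIC(X_K/K)^{\log,\tau(\overline{\Sigma})}$, and the assignment $(F,\nabla)\mapsto (\widetilde{F},\widetilde{\nabla})$ is a quasi-inverse to $P_\tau$.

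Full faithfulness of $P_\tau$ then follows from the full faithfulness of $P$ on each étale chart (which is the easy part of \cite{AndBal01} I.4.9), combined with étale descent of homomorphisms. The principal technical obstacle is verifying that the collection of étale charts needed to witness regularity with exponents in $\overline{\Sigma}$ on the $U_K$-side can be organized into a single étale covering of $X_K$ compatible with the function $h:\{1,\dots,s\}\to\{1,\dots,t\}$ assigning local components to global ones, so that the local AB extensions really assemble into descent data; once this bookkeeping is in place the gluing is mechanical, thanks to the uniqueness built into AB's construction.
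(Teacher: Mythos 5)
Your proposal is correct and follows exactly the route the paper takes: its entire proof of this theorem reads ``We can work \'etale locally, apply \cite{AndBal01} I.4.9, and use \'etale descent,'' and your argument is simply a careful expansion of those three steps (local application of Andr\'e--Baldassarri, gluing of the canonical extensions via their uniqueness, and descent of objects and homomorphisms). No discrepancy to report.
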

\begin{proof}
We can work \'etale locally, apply \cite{AndBal01} I.4.9, and use \'etale descent.
\end{proof}
\begin{prop}\label{tannakianalg}
If $\overline{\Sigma}$ is a subgroup of $\bar{K}^t/\mathbb{Z}^t$, 
the category $MIC(U_K/K)^{reg, \overline{\Sigma}}$ is a rigid abelian tensor category. 
%% closed by extensions and subquotients and its objects have internal hom.   
\end{prop}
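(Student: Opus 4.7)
The plan is to mirror the strategy used in the proof of Proposition~\ref{abelian tensor}. First I would observe that the ambient category $MIC(U_K/K)^{reg}$ of all regular integrable connections on $U_K$ is itself a rigid abelian tensor category: this is classical (going back to Deligne's \cite{Del70} II.4, and reproved in \cite{AndBal01} I.3--I.4), and comes down to the fact that regularity is preserved under kernels, cokernels, extensions, tensor products, duals, and internal homs, together with the fact that on the smooth scheme $U_K$ every coherent $\mathcal{O}_{U_K}$-module with an integrable connection is automatically locally free. Hence it suffices to show that $MIC(U_K/K)^{reg,\overline{\Sigma}}$ is stable under subobjects, quotients, tensor products and duals inside $MIC(U_K/K)^{reg}$.

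Next, since having exponents in $\overline{\Sigma}$ is, by definition~\ref{thetabar}, a property that is checked \'etale locally around every generic point of $D_K$, I would reduce to the local situation of an \'etale neighborhood $X_{l;K}$ equipped with a diagram as in (\ref{etale locally K}), in which the components $D_{j,l;K}$ are smooth and meet transversally. In this local setting Theorem~\ref{P} (or equivalently \cite{AndBal01} I.4.9 applied to the smooth components) provides, for any chosen section $\tau:\bar{K}/\mathbb{Z}\to\bar{K}$, a log extension of the restriction of $(E,\nabla)$ to an object of $MIC(X_{l;K}/K)^{\log,\tau(\overline{\Sigma})}$. Checking the closure properties for exponents of $(E,\nabla)$ in $\overline{\Sigma}$ therefore reduces to checking the analogous closure properties for exponents of log-$\nabla$-modules on $X_{l;K}$ along smooth components, where the notion of residue endomorphism from Definition~\ref{analyticexponents} (in its algebraic incarnation) applies directly.

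At that local level the verification is formal. For a short exact sequence $0\to E'\to E\to E''\to 0$ of log-$\nabla$-modules, the residue along $D_{j,l;K}$ of $E'$ (resp. $E''$) is the restriction (resp. quotient) of the residue of $E$, so its minimal polynomial divides that of $E$ and the exponents of $E'$, $E''$ are contained in those of $E$; thus exponents in $\tau(\overline{\Sigma}_{h(j)})$ pass to subquotients, so exponents in $\overline{\Sigma}_{h(j)}$ pass to subquotients after projecting modulo $\mathbb{Z}$. For tensor products the Leibniz rule gives
\[
\mathrm{res}_j(E_1\otimes E_2)=\mathrm{res}_j(E_1)\otimes\mathrm{id}+\mathrm{id}\otimes\mathrm{res}_j(E_2),
\]
so the exponents of $E_1\otimes E_2$ lie in $\overline{\Sigma}_h+\overline{\Sigma}_h$; for the dual one has $\mathrm{res}_j(E^\vee)=-\mathrm{res}_j(E)^\vee$ so the exponents lie in $-\overline{\Sigma}_h$. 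Both of these sets are contained in $\overline{\Sigma}_h$ precisely because $\overline{\Sigma}$ is assumed to be a subgroup of $(\bar{K}/\mathbb{Z})^t$.

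The only mildly delicate step is the bookkeeping in the reduction to the local situation: one has to check that the local log extensions produced by $P_\tau$ are compatible enough with the algebraic operations of subquotient, tensor and dual on $U_K$, so that the computation of residues above really takes place in a single local log-$\nabla$-module. This follows from the uniqueness part of Theorem~\ref{P} (the canonical nature of the extension with prescribed exponents), and I do not expect any genuine obstacle beyond that formal verification.
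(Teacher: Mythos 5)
Your proposal is correct, but it takes a visibly different (much more explicit) route than the paper: the paper's entire proof of this proposition is the single line ``It follows from proposition I.6.4.2 of \cite{AndBal01}'', which already packages the stability of the Andr\'e--Baldassarri exponents under subquotients, tensor products and duals. What you have written is in effect a reconstruction of the content of that citation: ambient rigidity of $MIC(U_K/K)^{reg}$ going back to \cite{Del70}, \'etale-local reduction to smooth components meeting transversally, and the residue identities $\mathrm{res}_j(E_1\otimes E_2)=\mathrm{res}_j(E_1)\otimes\mathrm{id}+\mathrm{id}\otimes\mathrm{res}_j(E_2)$ and $\mathrm{res}_j(E^{\vee})=-\mathrm{res}_j(E)^{\vee}$, with the subgroup hypothesis on $\overline{\Sigma}$ closing the loop --- exactly parallel to the paper's proof of Proposition \ref{abelian tensor} on the isocrystal side, as you intended. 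Your version buys self-containedness at the cost of length; the paper's buys brevity by delegating the identical computation to \cite{AndBal01}. One remark: the ``mildly delicate step'' you flag at the end is not actually needed in the form you state it. You do not need the canonical extension $P_\tau^{-1}$ of Theorem \ref{P} to be compatible with tensor products or duals: the tensor product (resp.\ dual) of \emph{any} two local log extensions of $E_1$ and $E_2$ is itself a log extension of $E_1\otimes E_2$ (resp.\ of $E^{\vee}$), and the Andr\'e--Baldassarri exponents are independent modulo $\mathbb{Z}$ of which log extension is used to compute them; so the residue formulas above already determine the exponents of the tensor product and the dual without any appeal to uniqueness or canonicity.
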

\begin{proof}
It follows from proposition I.6.4.2 of \cite{AndBal01}.
\end{proof}

Finally in this section, we recall the results in \cite{DiP12} on 
the functors relating the categories 
$I_{\mathrm{conv}}(X_k/V)^{\log}$, 
$\widehat{MIC}(\hat{X}/V)^{\log}$ and 
$MIC(X_K/K)^{\log}$. 

\begin{teo}\label{phigamma}
Let $\overline{\Sigma} = \prod_{h=1}^t \Sigma_h$ be a 
subset of $(\mathbb{Z}_p/\mathbb{Z})^t$ and $\tau: \mathbb{Z}_p/\mathbb{Z} 
\rightarrow \mathbb{Z}_p$ be a section. Then 
there exists a fully faithful functor 
$$ \Phi_{\tau}: I_{\mathrm{conv}}(X_k/V)^{\log, \tau(\overline{\Sigma})}
\longrightarrow \widehat{MIC}(\hat{X}/V)^{\log,\tau(\overline{\Sigma})} $$
which is induced by the fully faithful tensor functor 
$$ \Phi: I_{\mathrm{conv}}(X_k/V)^{\log}
\longrightarrow \widehat{MIC}(\hat{X}/V)^{\log} $$
of `forgetting the convergence condition'. 
Also, there exists an equivalence 
$$ \Gamma_{\tau}: MIC(X_K/K)^{\log, \tau(\overline{\Sigma})}
\longrightarrow \widehat{MIC}(\hat{X}/V)^{\log,\tau(\overline{\Sigma})} $$
which is induced by the GAGA equivalence 
$$ \Gamma: MIC(X_K/K)^{\log}
\longrightarrow \widehat{MIC}(\hat{X}/V)^{\log}. $$
\end{teo}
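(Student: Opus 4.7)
The plan is to cite \cite{DiP12} for the existence and formal properties of the unrestricted functors $\Phi$ and $\Gamma$, and then to check that each is compatible with the exponent conditions so as to restrict correctly to the subcategories with prescribed exponents in $\tau(\overline{\Sigma})$. More precisely, in \cite{DiP12} the functor $\Phi$ is constructed as a fully faithful tensor functor that forgets the convergence condition (building on \cite{Shi07b}), and the rigid/formal GAGA equivalence $\Gamma$ for modules with log integrable connection on $(X_K,M)$ versus $(\hat{X},M)$ is also established there, relying on the properness of $X$ over $V$. Granted these, the remaining content is the exponent statement.

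For $\Phi_\tau$: by Definition \ref{residueanalytic}, an object of $I_{\mathrm{conv}}(X_k/V)^{\log,\tau(\overline{\Sigma})}$ has prescribed exponents when tested against some \'etale cover $\coprod_l \hat{X}_l \rightarrow \hat{X}$ equipped with local charts as in \eqref{etale locally for l}. Applying $\Phi$ preserves the underlying isocoherent sheaf together with its log connection, so on each chart the induced log-$\nabla$-module on $\hat{X}_{l;K}$, and in particular its residue endomorphisms along each $\hat{D}_{j,l;K}$, are literally the same as those attached to the original isocrystal. The exponent condition is therefore preserved on the nose, so $\Phi_\tau$ is well-defined, and its full faithfulness is inherited from that of $\Phi$ by restriction to subcategories.

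For $\Gamma_\tau$, the argument is symmetric in the two directions. An \'etale chart \eqref{etale locally K} for $X_K$ is the generic fibre of a chart \eqref{etale locally for l} for $\hat{X}$, and conversely one passes from a formal chart to its algebraic counterpart via $K\otimes_V(-)$; the GAGA equivalence then identifies the induced algebraic log-$\nabla$-module on $X_{l;K}$ with the induced formal log-$\nabla$-module on $\hat{X}_{l;K}$. Since residues are algebraically determined by the connection in the local coordinates supplied by the chart, this identification matches residues along $D_{j,l;K}$ with residues along $\hat{D}_{j,l;K}$ in both directions. Thus $\Gamma$ restricts to $\Gamma_\tau$, which is both fully faithful and essentially surjective, hence an equivalence. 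The expected main (albeit modest) obstacle is precisely this local verification that residues correspond under the comparisons between the algebraic, formal, and convergent categories on each chart; once it is in place, everything else is a direct appeal to \cite{DiP12}.
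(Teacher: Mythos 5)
Your proposal is correct and takes essentially the same route as the paper: the paper's proof simply identifies $\Phi$ with the composite of $\tilde{\Phi}$ and the equivalence of proposition 2, section 5 of \cite{DiP12}, identifies $\Gamma$ with $\tilde{\Psi}$ there, and asserts that it is ``easy to see'' these induce $\Phi_{\tau}$ and $\Gamma_{\tau}$. You merely spell out that last compatibility-of-exponents check (residues are preserved on the nose by $\Phi$ and matched chart by chart under GAGA), which is exactly the content the paper leaves implicit.
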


\begin{proof}
The functor $\Phi$ is the composition of 
the fully faithful tensor functor $\tilde{\Phi}$ 
in section 5 of \cite{DiP12} and the equivalence in 
proposition 2, section 5 in \cite{DiP12}. 
The equivalence $\Gamma$ is the equivalence 
$\tilde{\Psi}$ in section 5 of \cite{DiP12}. 
Also, it is easy to see that $\Phi, \Gamma$ induce the functors 
$\Phi_{\tau}, \Gamma_{\tau}$ respectively. 
\end{proof}

\section{Properties of algebraization functor}\label{Tannakian properties of the algebraization functor}
The main result of \cite{DiP12} can be translated in the context of this paper as follows: if $\overline{\Sigma}$ is (NLG), then we constructed 
a fully faithful functor 
\begin{equation}
\psi_{\tau}:I^{\dag}(U_k/V)^{\log, \overline{\Sigma}}\longrightarrow 
MIC(U_K/K)^{reg, \overline{\Sigma}}, 
\end{equation}
which is the composition of the four functors 
\begin{align*}
& j^{\dag,-1}_{\tau}: I^{\dag}(U_k/V)^{\log, \overline{\Sigma}} 
\longrightarrow 
I_{\rm conv}(X_k/V)^{\log,\tau(\overline{\Sigma})}, \\ 
& \Phi_{\tau}: I_{\mathrm{conv}}(X_k/V)^{\log, \tau(\overline{\Sigma})}
\longrightarrow \widehat{MIC}(\hat{X}/V)^{\log,\tau(\overline{\Sigma})}, \\
& \Gamma^{-1}_{\tau}: \widehat{MIC}(\hat{X}/V)^{\log,\tau(\overline{\Sigma})} 
\longrightarrow 
MIC(X_K/K)^{\log, \tau(\overline{\Sigma})}, \\ 
& P_{\tau}: MIC(X_K/K)^{\log, \tau(\overline{\Sigma})} 
\longrightarrow MIC(U_K/K)^{reg, \overline{\Sigma}}, 
\end{align*}
where the first one is the quasi-inverse of the functor 
$j^{\dag}_{\tau}$ in theorem \ref{mainstable}, 
the second one is as in theorem \ref{phigamma}, 
the third one is the quasi-inverse of the functor 
$\Gamma_{\tau}$ in theorem \ref{phigamma} and 
the fourth one is the functor in theorem \ref{P}. 

We want to stress that there are tensor functors 
\begin{align*}
& j^{\dag}: I_{\mathrm{conv}}(X_k/V)^{\log} \longrightarrow 
I^{\dag}(U_k/V)^{\log}, \\
& \Phi: I_{\mathrm{conv}}(X_k/V)^{\log, \tau(\overline{\Sigma})}
\longrightarrow \widehat{MIC}(\hat{X}/V)^{\log,\tau(\overline{\Sigma})}, \\
& \Gamma^{-1}: \widehat{MIC}(\hat{X}/V)^{\log} 
\longrightarrow 
MIC(X_K/K)^{\log}, \\ 
& P: MIC(X_K/K)^{\log} 
\longrightarrow MIC(U_K/K)^{reg}, 
\end{align*}
extending $j_{\tau}^{\dag}, \Phi_{\tau}, \Gamma^{-1}_{\tau}, 
P_{\tau}$ respectively. 

The first main theorem in this paper is the following. 

\begin{teo}\label{teotensor}
The functor $\psi_{\tau}$ is a tensor 
functor which is independent of the choice of $\tau$. 
\end{teo}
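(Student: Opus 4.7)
The strategy is to exploit the fact that, of the four functors composing $\psi_{\tau}$, three of them ($\Phi$, $\Gamma^{-1}$, $P$) are tensor functors on the full categories without exponent restrictions, by theorem \ref{phigamma} and theorem \ref{P}, and so is $j^{\dagger}$. The only problematic factor is $j^{\dagger,-1}_{\tau}$, which is not manifestly a tensor functor because $\tau(\overline{\Sigma})$ is not closed under addition. The whole argument therefore reduces to comparing two lifts of the same overconvergent isocrystal to $I_{\mathrm{conv}}(X_{k}/V)^{\log}$ that have exponents differing by integer shifts, and showing that these lifts agree after applying the tensor functor $\Theta := P \circ \Gamma^{-1} \circ \Phi : I_{\mathrm{conv}}(X_{k}/V)^{\log} \rightarrow MIC(U_{K}/K)^{reg}$.

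\textbf{Tensor functor property.} Fix $\mathcal{E}_{1}, \mathcal{E}_{2} \in I^{\dagger}(U_{k}/V)^{\log, \overline{\Sigma}}$ and set $F_{i} := j^{\dagger,-1}_{\tau}(\mathcal{E}_{i})$ and $F' := j^{\dagger,-1}_{\tau}(\mathcal{E}_{1} \otimes \mathcal{E}_{2})$, using that the target category is closed under tensor by proposition \ref{abelian tensor}. Since $\Theta$ is a tensor functor, $\Theta(F_{1} \otimes F_{2}) \cong \Theta(F_{1}) \otimes \Theta(F_{2}) = \psi_{\tau}(\mathcal{E}_{1}) \otimes \psi_{\tau}(\mathcal{E}_{2})$, while $\Theta(F') = \psi_{\tau}(\mathcal{E}_{1} \otimes \mathcal{E}_{2})$. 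The objects $F_{1} \otimes F_{2}$ and $F'$ live in $I_{\mathrm{conv}}(X_{k}/V)^{\log}$ with exponents in $\tau(\overline{\Sigma}) + \tau(\overline{\Sigma})$ and in $\tau(\overline{\Sigma})$ respectively; because $\overline{\Sigma}$ is (NLG), both reduce to the same set $\overline{\Sigma}$ modulo $\mathbb{Z}^{t}$, and $j^{\dagger}(F_{1} \otimes F_{2}) \cong \mathcal{E}_{1} \otimes \mathcal{E}_{2} \cong j^{\dagger}(F')$. I would then produce a natural isomorphism $\Theta(F_{1} \otimes F_{2}) \cong \Theta(F')$ locally as follows: \'etale locally in a chart (\ref{etale locally for l}), the two log-$\nabla$-modules on $\hat{X}_{l;K}$ corresponding to $F_{1} \otimes F_{2}$ and $F'$ have residues along each $\hat{D}_{j,l;K}$ with eigenvalues differing by integers $n_{j}$, so twisting by the monomial $\prod_{j} y_{j,l}^{n_{j}}$ (invertible on $\hat{U}_{l;K}$) produces an isomorphism after pulling back to $U_{K}$ via $P \circ \Gamma^{-1}$. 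The integer shifts are determined intrinsically on each residue eigenspace, so these local isomorphisms are compatible under \'etale descent and assemble to a natural isomorphism in $MIC(U_{K}/K)^{reg, \overline{\Sigma}}$.

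\textbf{Independence of $\tau$.} For two sections $\tau, \tau'$ of $\mathbb{Z}_{p} \rightarrow \mathbb{Z}_{p}/\mathbb{Z}$, the lifts $F_{\tau} = j^{\dagger,-1}_{\tau}(\mathcal{E})$ and $F_{\tau'} = j^{\dagger,-1}_{\tau'}(\mathcal{E})$ are log convergent isocrystals with exponents in $\tau(\overline{\Sigma})$ and $\tau'(\overline{\Sigma})$ respectively, both with image $\mathcal{E}$ under $j^{\dagger}$. The same local monomial twisting argument gives a natural isomorphism $\Theta(F_{\tau}) \cong \Theta(F_{\tau'})$ in $MIC(U_{K}/K)^{reg, \overline{\Sigma}}$, identifying $\psi_{\tau}(\mathcal{E})$ with $\psi_{\tau'}(\mathcal{E})$. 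Naturality in $\mathcal{E}$ follows because the gauge transformation is determined (on each residue eigenspace) by the exponent shift, and because $P_{\tau}$ is fully faithful by theorem \ref{P}.

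\textbf{Main obstacle.} The technical heart is the local comparison statement: two log-$\nabla$-modules on a local chart $\hat{X}_{l;K}$ whose residue eigenvalues along each $\hat{D}_{j,l;K}$ coincide modulo $\mathbb{Z}$, and whose restrictions to the complement of $\hat{D}_{l;K}$ are isomorphic, produce naturally isomorphic objects in $MIC(U_{K}/K)^{reg, \overline{\Sigma}}$ after applying $P\Gamma^{-1}\Phi$. The delicacy is twofold: first, making the integer shifts canonical when residues have multiple eigenvalues (this requires decomposing according to the characteristic polynomial $P_{j}$ of definition \ref{analyticexponents}, which is possible because (NLG) implies (NID) on each $\tau(\overline{\Sigma}_{h})$); and second, ensuring that the locally defined gauge transformations descend globally under the \'etale cover of definitions \ref{unipotentmonodromy} and \ref{residueanalytic}. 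The (NLG) hypothesis is essential throughout, both to guarantee the ingredient equivalences (theorems \ref{mainstable}, \ref{P}, \ref{phigamma}) and to ensure that $\overline{\Sigma}$ is a subgroup, so that tensor products stay in the right category modulo $\mathbb{Z}^{t}$.
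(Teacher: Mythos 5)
Your reduction is the right one, and it matches the paper's: the only non-tensor ingredient is $j^{\dagger,-1}_{\tau}$, and everything comes down to comparing, after applying $\Theta = P \circ \Gamma^{-1}\circ \Phi$, two logarithmic extensions of the same overconvergent isocrystal whose exponents agree modulo $\mathbb{Z}^t$. But the mechanism you propose for that comparison --- a local gauge transformation by a monomial $\prod_j y_{j,l}^{n_j}$, glued by \'etale descent --- has a genuine gap. A single monomial twist only works if one integer shift $n_j$ suffices per component, i.e.\ if the module splits as a direct sum indexed by residue eigenvalues. You acknowledge this in your ``main obstacle'' paragraph, but the needed decomposition does not exist in general: the residue is an endomorphism of $E\otimes\mathcal{O}_{D_j}$, not of $E$, it need not be semisimple, and the generalized eigenspace decompositions along the various components need not be simultaneously realizable as a decomposition of the log-$\nabla$-module near the crossings. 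Making this work amounts to redoing the dévissage that underlies the logarithmic extension theorems, and your sketch does not supply it. A second, related omission: you treat the integer shifts symmetrically, but morphisms of log-$\nabla$-modules extend across the boundary only when the exponent differences avoid $\mathbb{Z}_{>0}$; this sign condition is what makes the comparison morphism exist in one direction but not a priori in the other, and your argument never confronts it.

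The paper's route avoids both issues. Proposition \ref{taupertensore} (via proposition 1.6 of \cite{Shiparalog}) shows that when the exponent differences lie in $\mathbb{Z}_p\setminus\mathbb{Z}_{>0}$, the identity on the overconvergent restriction extends to a \emph{unique global morphism} between the two lifts --- so there is nothing to glue and naturality is automatic. Independence of $\tau$ is then reduced, via an auxiliary section $\tau_3$, to the case where the sign condition holds, giving $f:\psi_{\tau_1}(\mathcal{E})\to\psi_{\tau_2}(\mathcal{E})$; and $f$ is shown to be an isomorphism not by a gauge transformation but by producing a morphism in the opposite direction into the twist $\mathcal{E}_1\otimes\mathcal{O}(\alpha D_k)$ (which $\Theta$ sends to $\psi_{\tau_1}(\mathcal{E})$ again), with the composites identified by uniqueness. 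The tensor compatibility is proved by the same device applied to $j^{\dagger,-1}_{\tau'}(\mathcal{E}_1\otimes\mathcal{E}_2)$ versus $j^{\dagger,-1}_{\tau}(\mathcal{E}_1)\otimes j^{\dagger,-1}_{\tau}(\mathcal{E}_2)$. To repair your proof you would need to replace the monomial-twist step by such a Hom-extension statement (or prove the eigenvalue decomposition you are implicitly assuming), and to handle the $\mathbb{Z}_{>0}$ asymmetry explicitly.
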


To prove the theorem, first we prove a certain property 
close to the full-faithfulness of the functor $j^{\dag}$. 

\begin{prop}\label{taupertensore}
Let $\Sigma_i := \prod_{h=1}^t \Sigma_{i,h} \, (i=1,2)$ 
be subsets of $\mathbb{Z}_p^t$ which are 
(NLD) and (NID) and suppose that, 
for every $h$ and every $\xi_i \in \Sigma_{i,h} \,(i=1,2)$, 
$\xi_2 - \xi_1$ belongs to $\mathbb{Z}_p \setminus \mathbb{Z}_{>0}$. 
Let $\mathcal{E}_i$ be an object in 
$I_{\rm conv}(X_k/V)^{\log, \Sigma_i}$ for $i=1,2$. Then 
the functor $j^{\dag}$ induces the isomorphism 
$$\mathrm{Hom}(\mathcal{E}_1, \mathcal{E}_2) \cong 
\mathrm{Hom}(j^{\dagger}\mathcal{E}_1, j^{\dagger}\mathcal{E}_2).$$
\end{prop}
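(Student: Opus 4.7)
The plan is to identify both sides of the claimed isomorphism with horizontal sections of the internal Hom, reduce to a local analytic extension problem across a boundary divisor, and solve that by a direct Laurent-series argument.

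First, I would pass to the internal Hom. Since $\mathcal{E}_1$ and $\mathcal{E}_2$ are locally free log convergent isocrystals, the object $\mathcal{F} := \mathcal{E}_1^{\vee} \otimes \mathcal{E}_2$ is again a log convergent isocrystal, and its exponents along each irreducible component $\hat{D}_{j,l}$ coming from $D^{h(j)}$ lie in $\Sigma_{2,h(j)} - \Sigma_{1,h(j)}$, which by hypothesis is contained in $\Z_p \setminus \Z_{>0}$. Via the natural identifications $\mathrm{Hom}(\mathcal{E}_1, \mathcal{E}_2) = H^0(\mathcal{F})^{\nabla}$ and $\mathrm{Hom}(j^{\dagger}\mathcal{E}_1, j^{\dagger}\mathcal{E}_2) = H^0(j^{\dagger}\mathcal{F})^{\nabla}$, it suffices to prove that the restriction map $H^0(\mathcal{F})^{\nabla} \to H^0(j^{\dagger}\mathcal{F})^{\nabla}$ is a bijection. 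Injectivity is automatic since a strict neighborhood of $]U_k[$ is dense in $\hat{X}_K$.

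For surjectivity I would work \'etale locally. Choose a covering as in \eqref{etale locally for l} on which $\mathcal{F}$ corresponds to a log-$\nabla$-module $F_l$ on $\hat{X}_{l,K}$ whose residue along each $\hat{D}_{j,l;K}$ has eigenvalues in $\Z_p \setminus \Z_{>0}$. A horizontal section of $j^{\dagger}\mathcal{F}$ is represented on each piece by a horizontal section of $F_l$ on a strict neighborhood $W_l$ of the tube of $U_k$ in $\hat{X}_{l,K}$, which contains annular collars of the form $\{\lambda \leq |y_{j,l}| \leq 1\}$ for some $\lambda < 1$ and every $j$. By \'etale descent the claim reduces to showing that every horizontal section $s$ of $F_l$ on $W_l$ extends to a horizontal section of $F_l$ on all of $\hat{X}_{l,K}$. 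Inducting on the number of log boundary components, one further reduces to the extension across a single divisor $\{y = 0\}$.

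To handle this core case I would expand $s$ as a Laurent series in $y$, $s = \sum_{n \in \Z} s_n y^n$, with the $s_n$ sections on the base depending analytically on the remaining variables, and write the connection in the $y$-direction as $\nabla = d + A(y)\,\mathrm{dlog}\,y$ with $A(y) = \mathrm{res} + \sum_{m\geq 1} A_m y^m$. Matching coefficients in $\nabla s = 0$ yields the recursion
\[
(n + \mathrm{res})\, s_n = - \sum_{m \geq 1} A_m\, s_{n-m}.
\]
Suppose for contradiction that $s_n \neq 0$ for some $n < 0$, and let $n_0$ be the smallest such index. Then $s_{n_0 - 1} = s_{n_0 - 2} = \dots = 0$, so the recursion forces $(n_0 + \mathrm{res})\, s_{n_0} = 0$; this means $-n_0 > 0$ is an eigenvalue of $\mathrm{res}$, contradicting our hypothesis that these eigenvalues lie in $\Z_p \setminus \Z_{>0}$. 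Hence $s_n = 0$ for all $n < 0$, and $s$ extends across $\{y = 0\}$, the extension being horizontal by construction.

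The main obstacle I anticipate is making the Laurent-coefficient argument rigorous uniformly in the base variables: the $s_n$ are sections of a locally free sheaf carrying the endomorphism $\mathrm{res}$, so the implication ``$(n_0 + \mathrm{res})\, s_{n_0} = 0 \Rightarrow -n_0 \in \mathrm{Spec}(\mathrm{res})$'' has to be justified pointwise, and one must ensure convergence of the extended series on the full polydisc. The (NID) and (NLD) hypotheses on $\Sigma_1$ and $\Sigma_2$ enter precisely here: they guarantee that $\mathrm{res}$ has a well-defined minimal polynomial as in definition \ref{analyticexponents}, so that the set of its eigenvalues lies in $\Sigma_{2,h(j)} - \Sigma_{1,h(j)} \subset \Z_p\setminus\Z_{>0}$ uniformly over the base, making the integrality contradiction available at every point.
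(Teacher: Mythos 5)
Your overall strategy (reduce to horizontal sections of the internal Hom $\mathcal{F}=\mathcal{E}_1^{\vee}\otimes\mathcal{E}_2$, then extend such sections across the boundary divisors one at a time) is reasonable in spirit, and it is true that the exponents of $\mathcal{F}$ land in $\Sigma_2-\Sigma_1\subset\mathbb{Z}_p\setminus\mathbb{Z}_{>0}$. But the core Laurent-series step has a genuine gap. A rigid-analytic horizontal section on an annulus $\{\lambda\leq |y|<1\}$ is a Laurent series $\sum_{n\in\mathbb{Z}}s_ny^n$ that may have \emph{infinitely many} nonzero negative-degree coefficients (subject only to $|s_n|\lambda^n\to 0$ as $n\to-\infty$); it is not meromorphic along $\{y=0\}$. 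So ``let $n_0$ be the smallest index with $s_{n_0}\neq 0$'' is not available, and the recursion $(n+\mathrm{res})s_n=-\sum_{m\geq 1}A_ms_{n-m}$ expresses each negative coefficient in terms of still more negative ones, giving no base case. A symptom of the problem is that your argument never actually uses the (NLD) hypothesis: the existence of the minimal polynomial of $\mathrm{res}$ is automatic (it is proposition-definition 1.24 of \cite{Shi11a}, with no Liouville condition), whereas the proposition itself is false without (NLD) --- with Liouville exponent differences the operators $(n+\mathrm{res})^{-1}$ grow too fast and horizontal sections on the annulus need not extend. Any correct proof must control this growth, which is exactly what the non-Liouville condition is for.

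The paper avoids the direct Laurent analysis by quoting two prior results. It covers $\hat{X}_K$ by the sets $A_J$ (products of a base with polydiscs $A^{|J|}[0,1)$), shows that $E_1,E_2$ restricted there are log-convergent with exponents in the prescribed (NID), (NLD) sets, and invokes proposition 2.12 of \cite{Shilogext} to conclude they are $\Sigma_1$-, resp.\ $\Sigma_2$-unipotent; it then applies proposition 1.6 of \cite{Shiparalog} (proposition \ref{sigma1sigma2}), which says precisely that for unipotent modules with the exponent-difference condition in $\mathbb{Z}_p\setminus\mathbb{Z}_{>0}$, restriction from $Y\times A^n[0,1)$ to $Y\times A^n[\lambda,1)$ is bijective on Hom groups, and finally glues the local extensions over the $A_J$ using the uniqueness part of that proposition. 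If you want to salvage your approach, you would essentially have to reprove those two ingredients, i.e., first put the module in a unipotent normal form (this is where (NID) and (NLD) are consumed) and only then run a coefficient-matching argument; as written, the extension step does not go through.
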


\begin{proof} 
The argument is analogous to the one in proposition 1.18 of \cite{Shiparalog}.

Given a morphism $f: j^{\dagger}\mathcal{E}_1 \rightarrow 
j^{\dagger}\mathcal{E}_2$ of log overconvergent isocrystals, 
we should prove that $f$ extends uniquely to a morphism 
$\tilde{f}: \mathcal{E}_1 \rightarrow \mathcal{E}_2$. 
We can work \'etale locally with the diagram \eqref{etale locally for}. 
Let us take a small enough strict neighborhood $W$ of $]U_k[_{\hat{X}}$ in $]X_k[_{\hat{X}}$ so that $f$ induces a morphism 
$\phi: E'_1 \rightarrow E'_2$, where  $E'_i \,(i=1,2)$ 
is the module with integrable connection induced by $j^{\dagger}\mathcal{E}_i$ on $W$. We denote by $E_i \,(i=1,2)$ 
the log-$\nabla$-module induced by $\mathcal{E}_i$ 
on $\hat{X}_K$ (with respect to $y_1, ..., y_s$). 
It suffices to extend the morphism $\phi$ uniquely to a morphism 
$E_1 \rightarrow E_2$. 

Let us take the covering
$$\hat{X}_K=\bigcup_{J\subset \{1\dots s\}}A_J, $$
of $\hat{X}_K$, where 
\begin{align*}
A_J& =\{P\in \hat{X}_K|\,\,\, |y_j(P)|<1 \,\,\,(j\in J)\,\,\, |y_j(P)|\geq \lambda\,\,\, (j \notin J) \} \\ 
& = \{P\in \hat{X}_K|\,\,\, y_j(P)=0 \,\,\,(j\in J),\,\,\,|y_j(P)|\geq \lambda\,\,\, (j \notin J)\}\times A^{|J|}[0,1). 
\end{align*}
Also, let us take $\lambda \in (0,1)\cap \Gamma^*$ so that $E'_i$'s are 
defined on the following set:
$$B=\{P\in \hat{X}_K|\,\,\, |y_j(P)|\geq \lambda \,\,\, \forall j \}.$$
The covering of $\hat{X}_{K}$ given by the $A_J$'s restricts to 
the covering $B=\bigcup_{J\subset\{1,\dots,s\}}B_J$, where 
\begin{align*}
B_J& =\{P\in \hat{X}_K|\,\,\, \lambda \leq |y_j(P)|<1 \,\,\,(j\in J),\,\,\,|y_j(P)|\geq \lambda\,\,\, (j \notin J)\} \\ 
& = \{P\in \hat{X}_K|\,\,\, y_j(P)=0 \,\,\,(j\in J)\,\,\, |y_j(P)|\geq \lambda\,\,\, (j \notin J)\}\times A^{|J|}[\lambda,1). 
\end{align*}
We can check that $E_1, E_2$ are log-convergent 
in the sense of \cite{Shilogext} on 
\begin{equation}\label{Jfinoa1}
A_J = \{P\in \hat{X}_K| y_j(P)=0 \,\,\,(j\in J),\,\,\,|y_j(P)|\geq \lambda\,\,\, (j \notin J)\}\times A^{|J|}[0,1)
\end{equation}
in the same way as proposition 3.6 of \cite{Shilogext} and they have exponents 
in $\prod_j \Sigma_{1,h(j)}$, $\prod_j \Sigma_{2,h(j)}$, respectively. So, 
by proposition 2.12 of \cite{Shilogext}, they are 
$\prod_j \Sigma_{1,h(j)}$-unipotent, $\prod_j \Sigma_{2,h(j)}$-unipotent 
respectively. Hence, by proposition \ref{sigma1sigma2} below, 
the canonical map 
\begin{equation}\label{restrhomtensor1}
\mathrm{Hom}_{A_J}(E_1, E_2) 
\rightarrow \mathrm{Hom}_{B_J}(E'_1, E'_2) 
\end{equation}
is an isomorphism. So the restriction $\phi_J$ of $\phi$ to $B_J$ 
extends uniquely to a morphism $E_1 \rightarrow E_2$ on $A_J$ for every $J$. 
On $A_I\cap A_J$ we have the extensions $\phi_I$ and $\phi_J$, which glue 
again by proposition \ref{sigma1sigma2} below 
because they coincide on the set 
\begin{equation*}
\begin{split}
B_I\cap B_J=\{P\in \hat{X}_K|\,\,\,  \lambda  \leq |y_j(P)| < 1 & \,\,\,(j\in (I\cup J)-(I\cap J)),\\
& |y_j(P)|\geq \lambda \,\,\, (j \notin  (I\cup J))\}\times A^{|I\cap J|}[\lambda,1), \\
\end{split}
\end{equation*}
both extending the restriction of $\phi$ on $B_I\cap B_J.$
So we obtain the desired extension of $\phi$ to $\hat{X}_K$. 
\end{proof}

\begin{prop}\label{sigma1sigma2}
Let $Y$ be a smooth rigid space over $K$, let $\lambda$ be an element in $(0,1)\cap \Gamma^*$ and let $\Sigma_1:=\prod_{j=1}^n\Sigma_{1,j}$, $\Sigma_2 :=\prod_{j=1}^n\Sigma_{2,j}$ be subsets of $\mathbb{Z}_p^n$. For $i=1,2$, let $E_i$ be a $\Sigma_i$-unipotent log-$\nabla$-module on $Y\times A^n_K[0,1)$ 
%%with respect to $t_1, \dots t_n$ (where $t_1, \dots t_n$ are the coordinates of $A^n_K[0,1)$) 
and let us put $E_i'=E_{i|Y\times A^n_K[\lambda, 1)}, $ 
which is a log-$\nabla$-module on $Y\times A^n_K[\lambda, 1).$ If for any $j$ and for any $\xi_{1} \in \Sigma_{1,j, E_1}$,  $\xi_{2} \in \Sigma_{2,j, E_2}$ we have $\xi_{2}-\xi_{1} \in \mathbb{Z}_p\setminus \mathbb{Z}_{>0},$ 
the restriction functor induces an isomorphism of 
the following groups of homomorphisms of log-$\nabla$-modules
$$\mathrm{Hom}(E_1, E_2)\cong \mathrm{Hom}(E_1', E_2').$$
\end{prop}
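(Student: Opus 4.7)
The plan is to identify both sides of the asserted isomorphism with horizontal sections of an internal Hom log-$\nabla$-module, use the filtrations witnessing $\Sigma_i$-unipotence to reduce to ``basic'' building blocks, and then prove the comparison blockwise by a direct Laurent-series calculation, promoted to the full de Rham cohomology to make the induction go through.

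First, we reformulate: $\mathrm{Hom}(E_1, E_2)$ equals the horizontal sections of $F := \mathcal{H}om(E_1, E_2) \cong E_1^{\vee} \otimes E_2$ over $Y \times A^n_K[0,1)$, and similarly $\mathrm{Hom}(E_1', E_2')$ equals horizontal sections of $F$ restricted to $Y \times A^n_K[\lambda, 1)$. Using $M_{\xi}^{\vee} \cong M_{-\xi}$ and $M_{\xi} \otimes M_{\xi'} \cong M_{\xi+\xi'}$, the filtrations witnessing the $\Sigma_{i, E_i}$-unipotence of $E_i$ combine into a filtration on $F$ whose successive quotients have the form $\pi_1^*(G_1^{\vee} \otimes G_2) \otimes \pi_2^* M_{\xi_2 - \xi_1}$ with $\xi_i \in \Sigma_{i, E_i}$; by the hypothesis, each such exponent $\xi_2 - \xi_1$ lies componentwise in $\mathbb{Z}_p \setminus \mathbb{Z}_{>0}$.

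Next, we prove by induction on the length of such a filtration the stronger statement that the restriction
$$H^*_{dR}(Y \times A^n_K[0, 1), F) \longrightarrow H^*_{dR}(Y \times A^n_K[\lambda, 1), F)$$
is an isomorphism in every degree; taking $H^0$ gives the proposition. The inductive step is a five-lemma applied to the long exact de Rham sequences of a short exact sequence $0 \to F' \to F \to F'' \to 0$ in which $F'$ is the top graded piece and $F''$ has shorter filtration.

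The base case is a basic block $F = \pi_1^* G \otimes \pi_2^* M_{\eta}$ with $\eta \in (\mathbb{Z}_p \setminus \mathbb{Z}_{>0})^n$. A Künneth decomposition reduces this to the one-variable assertion that $H^*_{dR}(A^1_K[0, 1), M_{\eta_j}) \to H^*_{dR}(A^1_K[\lambda, 1), M_{\eta_j})$ is an isomorphism. When $\eta_j \notin \mathbb{Z}$, the (NLD) hypothesis makes $t\partial_t + \eta_j$ invertible on Laurent coefficients, so both cohomology groups vanish; when $\eta_j \in \mathbb{Z}_{\leq 0}$, both $H^0$ and $H^1$ are one-dimensional, generated respectively by $t^{-\eta_j}$ and the class of $t^{-\eta_j}\,dt/t$. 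The hypothesis $\eta_j \notin \mathbb{Z}_{>0}$ is used precisely here: it ensures that whenever $\eta_j$ is an integer it is non-positive, so $t^{-\eta_j}$ is polynomial and already lives on the disk, making the restriction map on both $H^0$ and $H^1$ tautologically an isomorphism.

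The main obstacle is the passage from the naive induction on horizontal sections alone --- which fails because of nontrivial $H^1$ contributions from the blocks with $\eta_k \in \mathbb{Z}_{\leq 0}$ --- to the strengthened de Rham-cohomology statement that makes the five-lemma applicable, together with the technical bookkeeping needed to perform the Künneth decomposition uniformly along the base $Y$ (which may be covered by affinoids). Both issues are handled by the same (NLD)-based convergence estimates and Laurent-series inversions that appear in proposition 1.18 of \cite{Shiparalog}, to which the authors explicitly refer as the model for the argument.
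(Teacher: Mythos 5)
The paper's own ``proof'' of this proposition is a one-line citation to Proposition 1.6 of \cite{Shiparalog}, so you are supplying an argument the paper outsources; your overall strategy (pass to the internal Hom $E_1^{\vee}\otimes E_2$, filter it with graded pieces $\pi_1^*(G_1^{\vee}\otimes G_2)\otimes\pi_2^*M_{\xi_2-\xi_1}$, and run a d\'evissage on de Rham cohomology) is the right one and is essentially how such statements are proved in \cite{Shiparalog}. However, there is a genuine gap in your base case, and it propagates to your choice of induction hypothesis. You invoke ``the (NLD) hypothesis'' to make $t\partial_t+\eta_j$ invertible and conclude that $H^0$ and $H^1$ of a block $M_{\eta_j}$ with $\eta_j\notin\mathbb{Z}$ vanish on both the disk and the annulus. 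But the proposition assumes \emph{no} non-Liouville condition: the only hypothesis is $\xi_2-\xi_1\in\mathbb{Z}_p\setminus\mathbb{Z}_{>0}$, and even if one imports the (NLD) conditions on $\Sigma_1$ and $\Sigma_2$ from the ambient Proposition 5.2, those control differences \emph{within} each $\Sigma_i$, not the cross-differences $\xi_2-\xi_1$ that are the exponents of the graded pieces of $E_1^{\vee}\otimes E_2$. If $\eta_j=\xi_2-\xi_1$ is a $p$-adic Liouville number, $\mathrm{coker}(t\partial_t+\eta_j)$ does not vanish (division of the coefficients by $k+\eta_j$ destroys convergence), so $H^1$ of the basic blocks is not zero and your strengthened statement --- that restriction is an isomorphism on all of $H^*_{dR}$ --- is not available (and is presumably false in general, since killing the negative-degree part of a $1$-form on the annulus also requires these divisions).

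The repair is to weaken the induction hypothesis from ``isomorphism in every degree'' to the pair ``isomorphism on $H^0$ and injectivity on $H^1$,'' which is all the four/five lemma needs to propagate the $H^0$-isomorphism through extensions, and which holds for the basic blocks \emph{unconditionally}. Indeed, writing a section as $\sum_k g_kt^k$ with $g_k$ sections of $G$ over $Y$, the equation $(k+\eta_j)g_k=h_k$ with $h_k=0$ for $k<0$ forces $g_k=0$ for all $k<0$ (here one only needs $k+\eta_j\neq 0$ for $k<0$, i.e.\ exactly the hypothesis $\eta_j\notin\mathbb{Z}_{>0}$), and the convergence condition on $\{g_k\}_{k\geq 0}$ for the annulus $[\lambda,1)$ coincides with that for the disk $[0,1)$; this gives both $H^0$-bijectivity and $H^1$-injectivity without any Liouville input, and also sidesteps the K\"unneth bookkeeping you defer, since no actual computation of $H^1$ is required. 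As written, though, your argument proves the proposition only under an additional non-Liouville hypothesis on $\Sigma_2-\Sigma_1$ that is not part of the statement.
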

\begin{proof}
This is proposition 1.6 of \cite{Shiparalog}.
\end{proof}

We prove the independence of the functor $\psi_{\tau}$ of the choice 
of $\tau$. 

\begin{prop}\label{teotensor1}
The functor $\psi_{\tau}$ is independent of the choice of $\tau$. 
\end{prop}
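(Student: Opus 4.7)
The plan is to construct, for every $\mathcal{E}\in I^{\dag}(U_k/V)^{\log,\overline{\Sigma}}$ and every pair of sections $\tau,\tau'$, a natural isomorphism $\psi_\tau(\mathcal{E})\cong\psi_{\tau'}(\mathcal{E})$. Since the functors $\Phi,\Gamma^{-1},P$ composing $\Theta:=P\circ\Gamma^{-1}\circ\Phi$ are independent of $\tau$, only the log convergent extension $\mathcal{F}_\tau:=j^{\dag,-1}_\tau(\mathcal{E})$ varies with $\tau$, and all such extensions satisfy $j^{\dag}\mathcal{F}_\tau=\mathcal{E}$; so the problem reduces to comparing $\Theta(\mathcal{F}_\tau)$ with $\Theta(\mathcal{F}_{\tau'})$.

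I would bridge $\tau$ and $\tau'$ through a dominating third section $\tau''$. Since $\overline{\Sigma}_{\mathcal{E}}$ is finite by the remark after Definition~\ref{expoperE}, I can choose a section $\tau'':\mathbb{Z}_p/\mathbb{Z}\to\mathbb{Z}_p$ satisfying $\tau''(\overline{\xi})\geq\tau(\overline{\xi})$ and $\tau''(\overline{\xi})\geq\tau'(\overline{\xi})$ as integer lifts for every $h$ and every $\overline{\xi}\in\overline{\Sigma}_{\mathcal{E},h}$. The (NLG) assumption on $\overline{\Sigma}$ provides the (NID)/(NLD) part of the hypothesis of Proposition~\ref{taupertensore}, and the integer differences $\tau(\overline{\xi})-\tau''(\overline{\xi})$ and $\tau'(\overline{\xi})-\tau''(\overline{\xi})$ lie in $\mathbb{Z}_{\leq 0}\subset\mathbb{Z}_p\setminus\mathbb{Z}_{>0}$. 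Consequently, the proposition lifts $\mathrm{id}_\mathcal{E}$ uniquely to morphisms
\[
u_\tau:\mathcal{F}_{\tau''}\to\mathcal{F}_\tau,\qquad u_{\tau'}:\mathcal{F}_{\tau''}\to\mathcal{F}_{\tau'}
\]
in $I_{\mathrm{conv}}(X_k/V)^{\log}$, the uniqueness statement giving functoriality in $\mathcal{E}$. Applying $\Theta$ then produces morphisms $\Theta(u_\tau):\psi_{\tau''}(\mathcal{E})\to\psi_\tau(\mathcal{E})$ and $\Theta(u_{\tau'}):\psi_{\tau''}(\mathcal{E})\to\psi_{\tau'}(\mathcal{E})$ in $MIC(U_K/K)^{reg,\overline{\Sigma}}$, whose composite $\Theta(u_{\tau'})\circ\Theta(u_\tau)^{-1}$, once both are known to be isomorphisms, will give the desired natural isomorphism.

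The main obstacle is to prove that $\Theta(u_\tau)$ is an isomorphism. I plan to argue \'etale locally using the chart~\eqref{etale locally for l}: the objects $\mathcal{F}_\tau,\mathcal{F}_{\tau''}$ correspond to log-$\nabla$-modules on $\hat{X}_{l;K}$ with respect to $y_{1,l},\dots,y_{s,l}$, and $\Gamma^{-1}\circ\Phi$ turns them into algebraic log-$\nabla$-modules on $X_{l;K}$. Restricting by $P$ to $U_{l;K}=X_{l;K}\setminus D_{l;K}$ makes the $y_{j,l}$'s invertible, so by Remark~\ref{log-nonlog} the logarithmic structure along $\hat{D}$ disappears and one is left with a morphism of $\nabla$-modules. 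On a strict neighborhood of $]U_k[_{\hat{X}_l}$ this morphism is, by construction, the identity of the $\nabla$-module attached to $j^{\dag}\mathcal{F}_{\tau''}=j^{\dag}\mathcal{F}_\tau=\mathcal{E}$; rigid GAGA on the proper $\hat{X}$ combined with \'etale descent will then promote this local identification to the global conclusion that $\Theta(u_\tau)$ is an isomorphism on $U_K$.
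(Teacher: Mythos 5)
Your reduction to a third section $\tau''$ dominating $\tau$ and $\tau'$ on the finite set $\overline{\Sigma}_{\mathcal{E}}$, and the use of Proposition~\ref{taupertensore} to lift $\mathrm{id}_{\mathcal{E}}$ uniquely to morphisms $u_\tau:\mathcal{F}_{\tau''}\to\mathcal{F}_\tau$ and $u_{\tau'}:\mathcal{F}_{\tau''}\to\mathcal{F}_{\tau'}$, is essentially the first half of the paper's proof (the paper builds its $\tau_3$ by the same $\max$-type construction). The gap is in your final step, where you claim that $\Theta(u_\tau)$ is an isomorphism because it restricts to the identity on a strict neighborhood $W$ of $]U_k[_{\hat X}$, ``rigid GAGA and \'etale descent'' doing the rest. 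This conflates $W$ with $U_K^{\mathrm{an}}$: a strict neighborhood of the tube only contains the locus $\{|y_j|\geq\lambda\ \forall j\}$ for some $\lambda<1$, whereas $U_K^{\mathrm{an}}$ contains all points with $y_j\neq 0$, including closed points of $U_K$ specializing into $D_k$ (e.g.\ $y_j=\pi$). An algebraic morphism of locally free modules on $U_K$ whose determinant is invertible on $W$ can perfectly well have its determinant vanish on a divisor contained in $\{|y_j|<\lambda\}$, so being the identity near the tube does not force $\Theta(u_\tau)$ to be an isomorphism on $U_K$. GAGA does not help here because it applies to the proper $X_K$, where $\Phi(u_\tau)$ is genuinely \emph{not} an isomorphism (think of $\mathcal{O}\to\mathcal{O}(\alpha D)$); the whole difficulty of the algebraization functor is precisely the mismatch between $]U_k[$ and $U_K^{\mathrm{an}}$.

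The paper closes this gap differently: it constructs an explicit two-sided inverse. It introduces the twists $\mathcal{O}(\alpha D_k)$ (the unique log convergent extensions of the trivial overconvergent isocrystal with exponent $-\alpha$), observes that $P\circ\Gamma^{-1}\circ\Phi$ sends $\mathcal{O}(\alpha D_k)$ to the trivial object and the canonical map $\iota:\mathcal{O}\to\mathcal{O}(\alpha D_k)$ to the identity, and then, for $\alpha\gg 0$, uses Proposition~\ref{taupertensore} \emph{in the opposite direction} to produce a unique morphism $\mu':\mathcal{E}_2\to\mathcal{E}_1(\alpha D_k)$ extending $\mathrm{id}_{\mathcal{E}}$. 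Uniqueness identifies $\mu'\circ\mu$ with $\mathrm{id}\otimes\iota$ (and similarly for the other composite), so that after applying $P\circ\Gamma^{-1}\circ\Phi$ both composites become identities and $f$ is an isomorphism. You would need to supply an argument of this kind (or some other genuinely global mechanism) to finish; the uniqueness statement of Proposition~\ref{taupertensore} that you already invoke is exactly the tool that makes the inverse construction work.
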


\begin{proof}
Let us take sections $\tau_1, \tau_2: 
\mathbb{Z}_p/\mathbb{Z}\rightarrow \mathbb{Z}_p$. 
It suffices to prove the existence of the canonical 
isomorphism of functors $\psi_{\tau_1} \cong \psi_{\tau_2}$. 
Define $\tau_3: \mathbb{Z}_p/\mathbb{Z}\rightarrow \mathbb{Z}_p$ by 
$$ 
\tau_3(\xi) := 
\begin{cases} 
\tau_1(\xi), & \text{if $\tau_1(\xi) - \tau_2(\xi) 
\in \mathbb{Z}_{\leq 0}$}, \\ 
\tau_2(\xi), & \text{if $\tau_1(\xi) - \tau_2(\xi) \in \mathbb{Z}_{> 0}$}. 
\end{cases}
$$ 
Then, for any $\xi \in \mathbb{Z}_p/\mathbb{Z}$, 
$\tau_3(\xi) - \tau_1(\xi), \tau_3(\xi) - \tau_2(\xi)$ belong to 
$\mathbb{Z}_p \setminus \mathbb{Z}_{>0}$. Hence, by replacing 
$\tau_1$ or $\tau_2$ by $\tau_3$, we are reduced to proving the 
isomorphism $\psi_{\tau_1} \cong \psi_{\tau_2}$ in the case where 
$\tau_2(\xi) - \tau_1(\xi) \in \mathbb{Z}_p \setminus \mathbb{Z}_{>0}$ 
for any $\xi \in \mathbb{Z}_p/\mathbb{Z}$. 

Let us take an object $\mathcal{E}$ in 
$I^{\dag}(U_k/V)^{\log, \overline{\Sigma}}$ and put 
$\mathcal{E}_i := j_{\tau_i}^{\dagger, -1}(\mathcal{E})$. 
Then, by proposition \ref{taupertensore}, 
we have the unique morphism 
$\mu: \mathcal{E}_1 \rightarrow \mathcal{E}_2$ in 
$I_{\mathrm{conv}}(X_k/V)^{\log}$ which extends the identity on 
$\mathcal{E}$. By applying $P \circ \Gamma^{-1} \circ \Phi$, 
we obtain the morphism 
$f: \psi_{\tau_1}(\mathcal{E}) \rightarrow \psi_{\tau_2}(\mathcal{E})$. 
It suffices to prove that $f$ is an isomorphism. 

For an integer $\alpha$, we can define 
the log convergent isocrystal $\mathcal{O}(\alpha D_k)$ 
as in \cite{Shiparalog} proposition 3.1, in the following way: 
We denote by $\tau^{\alpha}:\mathbb{Z}_p/\mathbb{Z}\rightarrow \mathbb{Z}_p$ 
any section with $\tau^{\alpha}(0) = -\alpha$. 
By theorem \ref{mainstable}, 
if $\bold{0}=(\overline{0}, \dots, \overline{0})$, we know that the functor 
$$j_{\tau^{\alpha}}^{\dag}:I_{\mathrm{conv}}(\hat{X}/V)^{\log,\tau^{\alpha}(\bold{0})}\longrightarrow I^{\dag}(U_k/V)^{\log, \bold{0}}$$
is an equivalence of categories. 
Hence we have the unique object in 
$I_{\mathrm{conv}}(\hat{X}/V)^{\log, \tau^{\alpha}(\boldsymbol{0})}$ 
which extends the trivial log overconvergent isocrystal 
in $I^{\dag}(U_k/V)^{\log, \boldsymbol{0}}$. We denote it 
by $\mathcal{O}(\alpha D_k)$. By considering the description of it 
in terms of (formal) log connections as in \cite{Shiparalog} proposition 
3.1, we see that 
$\Gamma^{-1} \circ \Phi$ sends $\mathcal{O}(\alpha D_k)$ to the object 
$(\mathcal{O}_{X_K}(\alpha D_K), d)$ in 
$MIC(X_K/K)^{\log}$, and so it is sent to the trivial object in 
$MIC(U_K/K)^{reg}$ by the functor $P$. 
Also, when $\alpha \geq 0$, the unique morphism 
$\iota: \mathcal{O} \rightarrow \mathcal{O}(\alpha D_k)$ in 
$I_{\mathrm{conv}}(\hat{X}/V)^{\log}$ (where $\mathcal{O}$ is the trivial 
object) extending the identity (whose existence follows from 
proposition \ref{taupertensore}) is sent to the identity map. 

Now, let $\overline{\Sigma}_{\mathcal{E}} = 
\prod_h \overline{\Sigma}_{\mathcal{E},h}$ as in definition \ref{expoperE} and 
let $\alpha$ be an element in $\Z_{\geq 0}$ satisfying the 
following condition: for any $h$ and any $\xi_i \in 
\tau_i(\overline{\Sigma}_{\mathcal{E},h}) \, (i=1,2)$, 
$\xi_1 - \xi_2 - \alpha$ belongs to $\mathbb{Z}_p \setminus \mathbb{Z}_{>0}$. 
(Such $\alpha$ exists because $\overline{\Sigma}_{\mathcal{E}}$ is a 
finite set.) If we put $\mathcal{E}_1(\alpha D_k) := 
\mathcal{E}_1 \otimes \mathcal{O}(\alpha D_k)$, it is an object 
in $I_{\rm conv}(\hat{X}/V)^{\log, \tau_1(\overline{\Sigma}_{\mathcal{E}}) - \boldsymbol{\alpha}}$  
(where $\boldsymbol{\alpha}=(\alpha, \dots, \alpha)$) and 
$\mathcal{E}_2$ is an object 
in $I_{\rm conv}(\hat{X}/V)^{\log, \tau_2(\overline{\Sigma}_{\mathcal{E}})}$. 
Hence, by proposition \ref{taupertensore}, we have the unique 
morphism $\mu': \mathcal{E}_2 \rightarrow \mathcal{E}_1(\alpha D_k)$ 
extending the identity on $\mathcal{E}$, and by sending it by 
$P \circ \Gamma^{-1} \circ \Phi$, we obtain the morphism 
$g: \psi_{\tau_2}(\mathcal{E}) \rightarrow 
\psi_{\tau_1}(\mathcal{E}) \otimes (P \circ \Gamma^{-1} \circ \Psi)(\mathcal{O}(\alpha D_k)) = \psi_{\tau_1}(\mathcal{E})$. 
By the unicity of 
$\mu$ and $\mu'$, the composite $\mu' \circ \mu$ is equal to 
the map ${\rm id}_{\mathcal{E}_1} \otimes \iota$. Hence, by 
applying $P \circ \Gamma^{-1} \circ \Phi$, we see that the composite $g \circ f$ is equal to the identity map. 

Now we repeat the same argument: we have the unique morphism 
$\mu'': \mathcal{E}_2(-\alpha D_k) \rightarrow \mathcal{E}_1$ 
extending the identity on $\mathcal{E}$, which is by unicity 
equal to $\mu' \otimes {\rm id}_{\mathcal{O}(-\alpha D_k)}$. 
By applying $P \circ \Gamma^{-1} \circ \Phi$, we obtain the morphism 
$g: \psi_{\tau_2}(\mathcal{E}) \rightarrow \psi_{\tau_1}(\mathcal{E})$ 
same as before. By the argument in the previous paragraph, we see 
that the composite $f \circ g$ is also equal to the identity map. 
Hence we see that $f$ is an isomorphism, and so we are done. 
\end{proof}

Next we prove that the functor $\psi_{\tau}$ is a tensor functor. 

\begin{prop}\label{teotensor2}
The functor $\psi_{\tau}$ is a tensor functor. 
\end{prop}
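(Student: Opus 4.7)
The plan is to reduce to verifying that the only non-tensor step, namely $j^{\dag,-1}_{\tau}$, is compatible with tensor products up to an isomorphism which becomes the identity after applying $P \circ \Gamma^{-1} \circ \Phi$. First I would use that $\Phi$, $\Gamma^{-1}$ and $P$ are tensor functors (stated in theorems \ref{phigamma} and \ref{P}); then the claim reduces to constructing, for each $\mathcal{E}_1, \mathcal{E}_2 \in I^{\dag}(U_k/V)^{\log,\overline{\Sigma}}$, a canonical isomorphism
\begin{equation*}
(P \circ \Gamma^{-1} \circ \Phi)\bigl(j_{\tau}^{\dag,-1}(\mathcal{E}_1 \otimes \mathcal{E}_2)\bigr) \;\cong\; (P \circ \Gamma^{-1} \circ \Phi)\bigl(j_{\tau}^{\dag,-1}(\mathcal{E}_1) \otimes j_{\tau}^{\dag,-1}(\mathcal{E}_2)\bigr).
\end{equation*}
Set $\mathcal{F} := j_{\tau}^{\dag,-1}(\mathcal{E}_1 \otimes \mathcal{E}_2)$ and $\mathcal{G} := j_{\tau}^{\dag,-1}(\mathcal{E}_1) \otimes j_{\tau}^{\dag,-1}(\mathcal{E}_2)$. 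Since $j^{\dag}$ is a tensor functor, both $\mathcal{F}$ and $\mathcal{G}$ become canonically isomorphic to $\mathcal{E}_1 \otimes \mathcal{E}_2$ after applying $j^{\dag}$. However, the exponents of $\mathcal{F}$ lie in $\tau(\overline{\Sigma}_{\mathcal{E}_1 \otimes \mathcal{E}_2})$ while those of $\mathcal{G}$ lie in $\tau(\overline{\Sigma}_{\mathcal{E}_1}) + \tau(\overline{\Sigma}_{\mathcal{E}_2})$, and these two sets differ by integers (this is exactly the obstruction to $\tau$ being a group section).

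To bridge this gap, I would reuse the $\mathcal{O}(\alpha D_k)$ trick from the proof of proposition \ref{teotensor1}. Namely, using that $\overline{\Sigma}_{\mathcal{E}_i}$ are finite (by the remark following definition \ref{expoperE}), pick $\alpha \in \mathbb{Z}_{\geq 0}$ large enough so that, componentwise, every exponent $\xi$ of $\mathcal{F}$ and every exponent $\zeta$ of $\mathcal{G}$ satisfy both $\xi - \zeta - \alpha \in \mathbb{Z}_p \setminus \mathbb{Z}_{>0}$ and $\zeta - \xi - \alpha \in \mathbb{Z}_p \setminus \mathbb{Z}_{>0}$. Then $\mathcal{G}$ and $\mathcal{F}(\alpha D_k) := \mathcal{F} \otimes \mathcal{O}(\alpha D_k)$, respectively $\mathcal{F}$ and $\mathcal{G}(\alpha D_k) := \mathcal{G} \otimes \mathcal{O}(\alpha D_k)$, satisfy the hypothesis of proposition \ref{taupertensore}, yielding unique morphisms
\begin{equation*}
\mu : \mathcal{G} \longrightarrow \mathcal{F}(\alpha D_k), \qquad \mu' : \mathcal{F} \longrightarrow \mathcal{G}(\alpha D_k)
\end{equation*}
in $I_{\rm conv}(X_k/V)^{\log}$ extending the identity on $\mathcal{E}_1 \otimes \mathcal{E}_2$ after $j^{\dag}$.

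Now I would apply $P \circ \Gamma^{-1} \circ \Phi$ to these morphisms. As recalled in the proof of proposition \ref{teotensor1}, this composition sends $\mathcal{O}(\alpha D_k)$ to the trivial object of $MIC(U_K/K)^{reg}$, so $\mu$ and $\mu'$ produce morphisms $f : (P\circ \Gamma^{-1}\circ \Phi)(\mathcal{G}) \to (P\circ \Gamma^{-1}\circ \Phi)(\mathcal{F})$ and $g : (P\circ \Gamma^{-1}\circ \Phi)(\mathcal{F}) \to (P\circ \Gamma^{-1}\circ \Phi)(\mathcal{G})$. Forming the composites $\mu' \circ \mu$ and $(\mu \otimes \mathrm{id}_{\mathcal{O}(\alpha D_k)}) \circ \mu'$ and invoking the unicity part of proposition \ref{taupertensore} applied to the objects $\mathcal{F}$ and $\mathcal{F}(2\alpha D_k)$, resp.\ $\mathcal{G}$ and $\mathcal{G}(2\alpha D_k)$, identifies each composite with $\mathrm{id} \otimes \iota$ for the canonical section $\iota:\mathcal{O} \to \mathcal{O}(2\alpha D_k)$; applying $P\circ \Gamma^{-1} \circ \Phi$ kills the twist and shows $f \circ g$ and $g \circ f$ are identities. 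The resulting isomorphism is functorial in $\mathcal{E}_1,\mathcal{E}_2$ by the unicity from proposition \ref{taupertensore}, and the hexagon/pentagon coherence axioms follow from the same unicity statement applied once more. The main obstacle I expect is bookkeeping the exponents so that proposition \ref{taupertensore} applies symmetrically in both directions and checking that the uniqueness is strong enough to cover the coherence diagrams, but no new ideas beyond the $\mathcal{O}(\alpha D_k)$-shift used in the independence proof should be needed.
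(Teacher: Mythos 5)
Your proof is correct and follows essentially the same strategy as the paper: reduce everything to the non-tensor step $j^{\dag,-1}_{\tau}$, produce comparison morphisms between $j^{\dag,-1}_{\tau}(\mathcal{E}_1\otimes\mathcal{E}_2)$ and $j^{\dag,-1}_{\tau}(\mathcal{E}_1)\otimes j^{\dag,-1}_{\tau}(\mathcal{E}_2)$ via proposition \ref{taupertensore}, and use a twist by $\mathcal{O}(\alpha D_k)$ (which $P\circ\Gamma^{-1}\circ\Phi$ kills) together with unicity to see that the induced maps are mutually inverse. The only organizational difference is that the paper first chooses an auxiliary section $\tau'$ adapted to $\overline{\Sigma}_{\mathcal{E}_1\otimes\mathcal{E}_2}$ so that an untwisted morphism $\mathcal{F}_1\to\mathcal{F}_2$ exists and then invokes proposition \ref{teotensor1} to come back to $\tau$, whereas you stay with $\tau$ throughout and apply the twist symmetrically in both directions; both routes rest on exactly the same two ingredients.
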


\begin{proof}
Let $\mathcal{E}_i \,(i=1,2)$ be objects in 
$I^{\dag}(U_k/V)^{\log, \overline{\Sigma}}$ and 
let $\overline{\Sigma}_{\mathcal{E}_i} := 
\prod_h \overline{\Sigma}_{\mathcal{E}_i,h} \,(i=1,2), 
\overline{\Sigma}_{\mathcal{E}_1 \otimes \mathcal{E}_2} := 
\prod_h \overline{\Sigma}_{\mathcal{E}_1 \otimes \mathcal{E}_2,h}$ 
be as in definition \ref{expoperE}. 
Let us take a section $\tau': \mathbb{Z}_p/\mathbb{Z} \rightarrow 
\mathbb{Z}_p$ satisfying the following condition: For any $h$ and 
for any $a_{i} \in \overline{\Sigma}_{\mathcal{E}_i,h} \,(i=1,2), b \in 
\overline{\Sigma}_{\mathcal{E}_1 \otimes \mathcal{E}_2,h}$, 
$\tau(a_1) + \tau(a_2) - \tau'(b) \in \mathbb{Z}_p \setminus 
\mathbb{Z}_{>0}$. (It is possible to take such $\tau$ 
because $\overline{\Sigma}_{\mathcal{E}_i}, 
\overline{\Sigma}_{\mathcal{E}_1 \otimes \mathcal{E}_2}$ 
are finite sets.) Put 
$\mathcal{F}_1 := j_{\tau'}^{\dagger,-1}(\mathcal{E}_1 \otimes 
\mathcal{E}_2)$, 
$\mathcal{F}_2 := j_{\tau}^{\dagger,-1}(\mathcal{E}_1) \otimes 
 j_{\tau}^{\dagger,-1}(\mathcal{E}_2)$. 
Then, $\mathcal{F}_1$ belongs to 
$I_{\rm conv}(X_k/V)^{\log, \tau'(\overline{\Sigma}_{\mathcal{E}_1 \otimes \mathcal{E}_2})}$ and $\mathcal{F}_2$ belongs to 
$I_{\rm conv}(X_k/V)^{\log, \tau(\overline{\Sigma}_{\mathcal{E}_1})+ \tau(\overline{\Sigma}_{\mathcal{E}_2})}$. Hence, by definition of $\tau'$, 
there exists the unique morphism 
$\mu: \mathcal{F}_1 \rightarrow \mathcal{F}_2$ extending the 
identity morphism on $\mathcal{E}_1 \otimes \mathcal{E}_2$. 
By applying $P \circ \Gamma^{-1} \circ \Phi$, we obtain the morphism 
$f: \psi_{\tau'}(\mathcal{E}_1 \otimes \mathcal{E}_2) \rightarrow 
\psi_{\tau}(\mathcal{E}_1) \otimes \psi_{\tau}(\mathcal{E}_2)$. 

Then, by the same argument as the proof of the previous 
proposition (using $\mathcal{O}(\alpha D_k)$ for some $\alpha$), we see that 
the morphism $f$ is an isomorphism. By combining this with 
the isomorphism 
$\psi_{\tau'}(\mathcal{E}_1 \otimes \mathcal{E}_2) \cong 
\psi_{\tau}(\mathcal{E}_1 \otimes \mathcal{E}_2)$ of the previous 
proposition, we obtain the isomorphism 
$$ \psi_{\tau}(\mathcal{E}_1 \otimes \mathcal{E}_2) \cong
\psi_{\tau}(\mathcal{E}_1) \otimes \psi_{\tau}(\mathcal{E}_2), $$
as desired. 
\end{proof}

By propositions \ref{teotensor1} and \ref{teotensor2}, 
the proof of theorem \ref{teotensor} is finished. 

Next we prove the following theorem, which is the second main 
result in this paper. 

\begin{teo}\label{extsq}
The essential image of the functor 
$\psi_{\tau}$ is closed under 
extensions and subquotients. 
\end{teo}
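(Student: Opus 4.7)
The plan is to reduce the closure statement to a purely local one about unipotent log-$\nabla$-modules. Of the four functors composing $\psi_\tau$, three are equivalences of categories ($j_\tau^{\dagger,-1}$, $\Gamma_\tau^{-1}$, and $P_\tau$) and the fourth, $\Phi_\tau$, is merely fully faithful. Since equivalences of abelian categories preserve short exact sequences, the theorem is equivalent to the statement that the essential image of
$$\Phi_\tau:I_{\mathrm{conv}}(X_k/V)^{\log,\tau(\overline\Sigma)} \longrightarrow \widehat{MIC}(\hat X/V)^{\log,\tau(\overline\Sigma)}$$
is closed under extensions and subquotients in $\widehat{MIC}(\hat X/V)^{\log,\tau(\overline\Sigma)}$, and this is what I would aim to prove.

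I would then work étale locally with the diagram \eqref{etale locally for}, since closure is a local condition. Under the (NLG) hypothesis on $\overline\Sigma$, the set $\tau(\overline\Sigma)$ is (NID) and (NLD). Locally, an object of $\widehat{MIC}(\hat X/V)^{\log,\tau(\overline\Sigma)}$ is a log-$\nabla$-module on $\hat X_K$ with respect to $y_1,\dots,y_s$. The main conceptual step is to establish the following local criterion: such a log-$\nabla$-module lies in the essential image of $\Phi_\tau$ if and only if its restriction to each annular neighborhood $\hat D^\circ_{i,j,l;K} \times A^1_K[0,1)$ is $\tau(\overline\Sigma)$-unipotent. This follows by combining the analysis used in the proof of proposition \ref{taupertensore}, where proposition 2.12 of \cite{Shilogext} extracts $\tau(\overline\Sigma)$-unipotence from log-convergence plus exponents in $\tau(\overline\Sigma)$, with the equivalence in theorem \ref{mainstable} for the converse direction.

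With this local criterion in hand, the two closure properties are easy. For subquotients, I would invoke proposition 1.17 of \cite{Shilogext}, the same tool already used in the proof of proposition \ref{abelian tensor}, which asserts that $\tau(\overline\Sigma)$-unipotence of log-$\nabla$-modules is preserved under the formation of subobjects and quotients. For extensions, given $0 \to E_1 \to E \to E_2 \to 0$ with $E_1$ and $E_2$ $\tau(\overline\Sigma)$-unipotent with filtrations $\{E_{1,i}\}_{i=0}^{n_1}$ and $\{E_{2,j}\}_{j=0}^{n_2}$, I would concatenate them: first take the filtration $\{E_{1,i}\}$ of $E_1 \subset E$, then append the pre-images in $E$ of the $E_{2,j}$ for $j = 1, \dots, n_2$. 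The successive quotients of the resulting filtration coincide with the successive quotients of the filtrations of $E_1$ and $E_2$, which are of the required form $\pi_1^* F \otimes \pi_2^*(M_\xi)$ with $\xi \in \tau(\overline\Sigma)$; hence $E$ is $\tau(\overline\Sigma)$-unipotent.

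The hard part will be the local characterization step: although the ingredients are all in place in \cite{Shilogext} and \cite{DiP12}, one must carefully articulate the identification between the intrinsic ``log-convergence'' condition (which defines membership in the essential image of $\Phi_\tau$) and the concrete ``$\tau(\overline\Sigma)$-unipotence on annular neighborhoods'' condition, so that the abstract closure problem for $\Phi_\tau$ becomes the elementary closure problem for unipotent log-$\nabla$-modules to which proposition 1.17 of \cite{Shilogext} and the concatenation-of-filtrations argument apply directly.
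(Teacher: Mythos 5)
There is a genuine gap. Your first reduction --- to showing that the essential image of $\Phi_\tau$ is closed under extensions and subquotients, since $j_\tau^{\dagger,-1}$, $\Gamma_\tau^{-1}$ and $P_\tau$ are equivalences --- is exactly what the paper does. But your proposed local criterion, namely that a log-$\nabla$-module lies in the essential image of $\Phi_\tau$ if and only if its restrictions to the annuli $\hat D^\circ_{i,j,l;K}\times A^1_K[0,1)$ are $\tau(\overline\Sigma)$-unipotent, is not correct, and the difficulty is not merely one of ``careful articulation.'' Membership in the essential image of $\Phi$ (the functor of forgetting the convergence condition) is a $p$-adic convergence condition on the whole of $\hat X$, with nontrivial content even far away from $D$; unipotence on annular neighborhoods of $D$ only constrains the behavior near the divisor. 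The two implications you cite are both one-directional and do not combine into your criterion: proposition 2.12 of \cite{Shilogext} deduces unipotence \emph{from} log-convergence for objects already known to come from convergent isocrystals, and theorem \ref{mainstable} is an equivalence between two categories of isocrystals, both of which already have convergence built in; neither says that a formal log connection that is unipotent on the annuli is convergent. Indeed, the paper's proof goes on to reduce to the open formal subscheme $\hat U=\hat X\setminus\hat D$, where there are no annuli around $D$ at all and the entire content of the statement is the convergence condition.

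What the paper actually does at this point, and what is missing from your proposal, is the characterization of the essential image of $\Phi$ on $\hat U$ in terms of \emph{special} formal stratifications (theorem \ref{sp}): an object $(E,\{\epsilon_n\})$ of $\widehat{Str}(\hat U/V)$ comes from a convergent isocrystal if and only if there is a sequence $k(n)=o(n)$ with $p^{k(n)}\epsilon_n^{\pm 1}$ preserving an integral lattice. The closure under extensions is then an explicit estimate: writing $\nabla_G(\partial_i)$ in upper-triangular form with respect to a splitting of the underlying sheaves, one expands $\frac{1}{\boldsymbol\beta!}\nabla_{G,\boldsymbol\beta}$ and bounds the off-diagonal terms using $\max v_p(\boldsymbol\beta!/(b!\,c!))=o(n)$; closure under subquotients follows by restricting and projecting the integrality condition. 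None of this is subsumed by the unipotence formalism, so your argument as it stands does not establish the theorem. (Your subsidiary observations --- that unipotence is closed under subquotients by proposition 1.17 of \cite{Shilogext} and under extensions by concatenating filtrations, and that the exponent condition is preserved --- are correct but address only the exponent part of the statement, which the paper also disposes of in one line before turning to the real issue of convergence.)
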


In order to prove this theorem, it suffices to prove the same property 
for the functor 
$$\Phi_{\tau}: I_{\rm conv}(X_k/V)^{\log, \tau(\overline{\Sigma})}\longrightarrow \widehat{MIC}(\hat{X}/V)^{\log, \tau(\overline{\Sigma})}, $$
because the functors $j^{\dag,-1}_{\tau}, \Gamma^{-1}_{\tau}, P_{\tau}$ are 
equivalences. 

Also, it is easy to see that the condition on exponent is closed 
under extensions and subquotients. Hence it suffices to prove the 
same property for the functor 
$$\Phi: I_{\mathrm{conv}}(X_k/V)^{\log}\longrightarrow 
\widehat{MIC}(\hat{X}/V)^{\log}. $$
Then, by proposition 8 of \cite{DiP12}, one can replace 
$\hat{X}$ by $\hat{U}$ to prove it. 

Now we recall the definition of formal 
stratifications and special formal stratifications 
(definition 3.2.10 of \cite{Shi00}, definition 15 of \cite{DiP12}). 

\begin{defi}
We define the category $\widehat{Str}(\hat{U}/V)$ of 
formal stratifications on $\hat{U}$ as follows: 
An object is a pair $(E,\{\epsilon_n\})$ of a locally free isocoherent 
sheaf $E$ on $\hat{U}$ and 
a compatible 
family $\{\epsilon_n: \mathcal{O}_{\hat{U}^n} \otimes E \rightarrow 
E \otimes \mathcal{O}_{\hat{U}^n}\}_{n \in \mathbb{N}}$ 
of linear isomorphisms 
(where $\hat{U}^n$ is the $n$-th infinitesimal neighborhood of 
$\hat{U}$ in $\hat{U} \times_V \hat{U}$) satisfying the cocycle 
condition on the infinitesimal neighborhoods of 
$\hat{U}$ in $\hat{U} \times_V \hat{U} \times_V \hat{U}$. 
\end{defi}
\begin{defi}\label{special}
Let $(E,\{\epsilon_n\})$ be an object of $\widehat{Str}(\hat{U}/V)$ and let $\tilde {E}$ be a coherent $p$-torsion-free $\mathcal{O}_{\hat{U}}$-module such that $K\otimes \tilde{E}=E$; we say that $(E,\{\epsilon_n\})$ is special if there exists a sequence of integers $k(n)$ for $n \in \mathbb{N}$ such that:
\begin{itemize}
\item[(i)]$k(n)=o(n)$ for $n\rightarrow \infty$,
\item[(ii)]the restriction of the map $p^{k(n)}\epsilon_n$ to 
$\mathcal{O}_{\hat{U}^n} \otimes \tilde{E}$ has 
image contained in $\tilde{E} \otimes \mathcal{O}_{\hat{U}^n}$ and the restriction of the map $p^{k(n)}\epsilon_n^{-1}$ to $\tilde{E} \otimes \mathcal{O}_{\hat{U}^n}$ has image contained in $\mathcal{O}_{\hat{U}^n} \otimes \tilde{E}$.
\end{itemize}
\end{defi}

It is known that the definition of being special is 
independent of the choice of a coherent $p$-torsion-free $\mathcal{O}_{\hat{U}}$-module $\tilde{E}$. Hence the property of being special is a local 
property. 

We proved in theorem 3.2.15 of \cite{Shi00} 
and in section 6 of \cite{DiP12} the following theorem, 
which characterizes 
the essential image of the functor $\Phi$ on $\hat{U}$. 

\begin{teo}\label{sp}
We have an equivalence of categories 
\begin{equation}\label{micstr}
\widehat{MIC}(\hat{U}/V) \cong \widehat{Str}(\hat{U}/V), 
\end{equation}
and an object in $\widehat{Str}(\hat{U}/V)$ belongs to 
the essential image of 
$\Phi: I_{\rm conv}(U_k/V) \longrightarrow 
\widehat{MIC}(\hat{U}/V) \cong \widehat{Str}(\hat{U}/V)$ if and only if 
it is special. 

When $x_1, \dots, x_l$ are local coordinates of $\hat{U}$ and 
$\partial_1, \dots, \partial_l$ denote the corresponding derivations, 
the image of $(E,\nabla)$ by \eqref{micstr} 
is given by $(E,\{\epsilon_n\})$ with 
$$ \epsilon_n = \sum_{|{\boldsymbol{\beta}}| \leq n} 
\frac{1}{{\boldsymbol{\beta}}!} \nabla_{{\boldsymbol{\beta}}} \cdot 
(1 \otimes x_i - x_i \otimes 1)^{{\boldsymbol{\beta}}}, \quad 
\nabla_{\boldsymbol{\beta}} := 
(\textrm{id}\otimes \partial_{1}\circ \nabla )^{\beta_1}\circ \dots \circ (\textrm{id}\otimes \partial_{l}\circ \nabla )^{\beta_l}.$$
\end{teo}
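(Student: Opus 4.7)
The plan is to work locally in coordinates $x_1,\dots,x_l$ on affine opens of $\hat U$ and give an explicit dictionary between integrable connections and formal stratifications via the displayed Taylor formula, and then identify the essential image of $\Phi$ with special stratifications via a Dwork-style growth estimate. Once everything is verified locally and shown to be coordinate-free, globalization is automatic since both sides of \eqref{micstr} and the condition of being special are Zariski-local on $\hat U$.

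First I would establish the equivalence \eqref{micstr}. Given $(E,\nabla)\in\widehat{MIC}(\hat U/V)$, I define $\{\epsilon_n\}$ by the displayed formula; integrability of $\nabla$ ensures the operators $\nabla_{\boldsymbol{\beta}}$ are well-defined independent of the order of the factors, so each $\epsilon_n$ is a well-defined $\mathcal O_{\hat U^n}$-linear map that reduces to the identity modulo the diagonal ideal. Its invertibility follows because the leading term is the identity and $\mathcal O_{\hat U^n}$ is a nilpotent thickening, while the compatibility in $n$ is visible from the formula. The cocycle condition on the triple infinitesimal neighborhood translates, after expansion, into the Leibniz rule for $\nabla$ and the vanishing of the curvature; both are automatic from integrability. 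Conversely, given $(E,\{\epsilon_n\})$, the first-order part of $\epsilon_1$ defines a connection $\nabla$ whose integrability is forced by the cocycle condition. A standard comparison of two coordinate systems shows that the formula is coordinate-free, so these constructions glue.

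Second I would characterize the essential image of $\Phi$. By definition, an object of $I_{\mathrm{conv}}(U_k/V)$ corresponds, locally on $\hat U$, to a locally free isocoherent sheaf equipped with an isomorphism $\epsilon$ on a genuine analytic tubular neighborhood of the diagonal in $(\hat U\times_V\hat U)_K$, not merely a compatible family on the formal infinitesimal neighborhoods. Expanding such an $\epsilon$ as a power series in the sections $1\otimes x_i - x_i\otimes 1$ and invoking convergence on a tube of radius at least $1$ translates precisely into the existence of integers $k(n)=o(n)$ such that $p^{k(n)}\epsilon_n$ and $p^{k(n)}\epsilon_n^{-1}$ preserve an integral lattice $\tilde E$, which is the definition of special. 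Conversely, given a special formal stratification, the same estimate lets one reassemble the truncations $\epsilon_n$ into a convergent analytic isomorphism on a tube, recovering a convergent isocrystal. Independence from the choice of $\tilde E$ follows from the fact that any two $p$-torsion-free integral models differ by a bounded power of $p$, which is absorbed into $k(n)$.

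The main obstacle is exactly this second step: carefully matching the formal-combinatorial language of compatible $\{\epsilon_n\}$ on nilpotent thickenings with the analytic language of a single $\epsilon$ on a tube of positive radius, while tracking the precise $p$-adic growth of denominators. The argument is essentially the one given in theorem 3.2.15 of \cite{Shi00} and in section 6 of \cite{DiP12}; since the log structure on $\hat U$ is trivial, no extra logarithmic input is needed beyond the classical convergent-isocrystal story, and I would simply invoke those references for the detailed computations after setting up the local dictionary as above.
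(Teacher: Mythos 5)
Your proposal is correct and ultimately coincides with what the paper does: the paper gives no proof of theorem \ref{sp} at all, but simply recalls it from theorem 3.2.15 of \cite{Shi00} and section 6 of \cite{DiP12}, which are exactly the references you defer to for the detailed computations. Your preliminary sketch (the Grothendieck-style dictionary between connections and formal stratifications, and the translation of convergence on the open unit tube into the $k(n)=o(n)$ growth condition) is an accurate summary of the content of those cited proofs.
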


Note that $\epsilon_n^{-1}$ is given by 
$$ \epsilon_n^{-1} = \sum_{|{\boldsymbol{\beta}}| \leq n} 
\frac{(-1)^{|{\boldsymbol{\beta}}|}}{{\boldsymbol{\beta}}!} \nabla_{{\boldsymbol{\beta}}} \cdot 
(1 \otimes x_i - x_i \otimes 1)^{{\boldsymbol{\beta}}} $$
in the above situation. 

By the above theorem, 
the proof of theorem \ref{extsq} is reduced to the claim that the 
category of special formal stratifications is closed under 
taking extensions and subquotients, which we prove in the following 
two lemmata. 

\begin{lemma}\label{estensioni}
Let $(E,\{e_n\}), (F,\{f_n\}), (G,\{g_n\})$ be objects in 
${\widehat{Str}}(\hat{U}/V)$ with 
$(E,\{e_n\})$ and $(F,\{f_n\})$ special and suppose that 
there exists an exact sequence 
\begin{equation}\label{exstr}
0\longrightarrow(E,\{e_n\})\longrightarrow(G,\{g_n\})\longrightarrow
(F,\{f_n\})\longrightarrow0
\end{equation}
in $\widehat{Str}(\hat{U}/V)$. 
Then $(G,\{g_n\})$ is also special.
\end{lemma}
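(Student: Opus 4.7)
The plan is to verify specialness locally and pass to the connection picture via theorem \ref{sp}. Since being special is local on $\hat U$ and independent of the lattice, I would restrict to a sufficiently small affine open $\Spf R \subset \hat U$ with coordinates $x_1,\dots,x_l$ and derivations $\partial_1,\dots,\partial_l$, on which lattices $\tilde E,\tilde F$ witness specialness of $E,F$ with control functions $k_E,k_F=o(n)$. After further shrinking, the sequence of projective $(K\otimes R)$-modules $0\to E\to G\to F\to 0$ splits; fix a splitting $s\colon F\to G$ of $\mathcal O$-modules and take $\tilde G:=\tilde E\oplus s(\tilde F)$ as candidate lattice. In this decomposition the connection $\nabla^G$ corresponding to $\{g_n\}$ reads
\begin{equation*}
\nabla^G = \begin{pmatrix} \nabla^E & \eta \\ 0 & \nabla^F \end{pmatrix},
\end{equation*}
where the off-diagonal $\eta\colon F\to E\otimes\Omega^1_{\hat U/V}$ is $\mathcal O$-linear (Leibniz forces $\eta(rf)=r\eta(f)$ because the induced sub and quotient connections are $\nabla^E,\nabla^F$).

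By integrability the $\nabla^G_{\partial_i}$ mutually commute, so $\nabla^G_{\boldsymbol\beta}=\prod_i(\nabla^G_{\partial_i})^{\beta_i}$ is block upper triangular with diagonal entries $\nabla^E_{\boldsymbol\beta},\nabla^F_{\boldsymbol\beta}$, and its off-diagonal block is a finite sum
\begin{equation*}
\sum_{\boldsymbol\gamma+e_i+\boldsymbol\delta=\boldsymbol\beta} c_{\boldsymbol\gamma,i,\boldsymbol\delta}\,\nabla^E_{\boldsymbol\gamma}\circ\eta_i\circ\nabla^F_{\boldsymbol\delta}
\end{equation*}
(with $\eta=\sum_i\eta_i\,dx_i$ and integer coefficients $c_{\boldsymbol\gamma,i,\boldsymbol\delta}$ polynomially bounded in $|\boldsymbol\beta|$). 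Crucially, each summand involves $\eta$ exactly once, because $\nabla^G$ is strictly block upper triangular. After dividing by $\boldsymbol\beta!$, the rational combinatorial coefficients have $p$-adic norm at most $p^{O(\log|\boldsymbol\beta|)}$, the factors $\nabla^E_{\boldsymbol\gamma}/\boldsymbol\gamma!$ and $\nabla^F_{\boldsymbol\delta}/\boldsymbol\delta!$ preserve $\tilde E,\tilde F$ up to $p^{k_E(|\boldsymbol\beta|)},p^{k_F(|\boldsymbol\beta|)}$, and $\eta$ preserves them up to a fixed $p^c$. Summing the polynomially many terms, the off-diagonal of $\nabla^G_{\boldsymbol\beta}/\boldsymbol\beta!$ has operator norm at most $p^{k_E(n)+k_F(n)+O(\log n)}$ for $|\boldsymbol\beta|\leq n$. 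Setting $k(n):=k_E(n)+k_F(n)+O(\log n)=o(n)$, both $p^{k(n)}g_n$ and $p^{k(n)}g_n^{-1}$ (whose off-diagonal $-e_n^{-1}h_nf_n^{-1}$ is bounded analogously) preserve $\tilde G$ integrally, proving $(G,\{g_n\})$ special.

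The hard part is the combinatorial estimate of the off-diagonal block; it depends crucially on $\eta$ being $\mathcal O$-linear and appearing \emph{exactly once} per summand, so that the bound on $\|\eta\|$ does not compound with $n$, while the remaining differential factors are controlled by the specialness of $E$ and $F$, and the $p$-adic size of the rational coefficients contributes only an $O(\log n)$ correction.
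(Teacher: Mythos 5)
Your proof is correct and follows essentially the same route as the paper's: split the sequence locally, write the connection in block upper-triangular form with an $\mathcal{O}$-linear off-diagonal term, expand $\nabla_{G,\boldsymbol{\beta}}/\boldsymbol{\beta}!$ so that each summand contains the off-diagonal map exactly once, and control the resulting multinomial-type coefficients (the paper cites \cite{Shi00}, proposition 5.2.14, for the $o(n)$ bound where you give the sharper $O(\log n)$ estimate). The only cosmetic difference is that you invert $g_n$ by the block formula $-e_n^{-1}h_nf_n^{-1}$, whereas the paper uses the explicit sign-flipped series for $\epsilon_n^{-1}$; both yield the same bound.
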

\begin{proof}
We can restrict to the local situation 
because being special is a local property. 
So we can take local coordinates $x_1, \dots, x_l$ of $\hat{U}$ and 
the corresponding derivations $\partial_1, \dots, \partial_l$. 
Also, we can suppose that $F$ is a projective 
$K\otimes\mathcal{O}_{\hat{X}}$-module. Then 
the exact sequence \eqref{exstr} splits as a sequence of 
isocoherent sheaves. We fix a splitting. Then we have an equality 
$G = E \oplus F$ as $K\otimes\mathcal{O}_{\hat{X}}$-modules. 
Also, we take $p$-torsion free coherent 
$\mathcal{O}_{\hat{X}}$-modules $\tilde{E}$ and $\tilde{F}$ with $K\otimes \tilde{E}=E$ and $K\otimes \tilde{F}=F$. Then 
we have the isomorphisms 
$K\otimes(\tilde{E}\oplus\tilde{F})= E \oplus F = G$. 

Let us denote by $(E,\nabla_E)$, $(F,\nabla_F)$ and $(G,\nabla_G)$ 
the modules with integrable connections associated to 
$(E,\{e_n\})$, $(F,\{f_n\})$ and $(G,\{g_n\})$ respectively. 
Then, by the exact sequence \eqref{exstr} and the splitting in the previous paragraph, we can write $\nabla_G(\partial_{i})$ as the matrix 
$$
 \nabla_G(\partial_{i})=\left(\begin{array}{cc}\nabla_{E}(\partial_{i})&B_i\\
0&\nabla_{F}(\partial_{i})\\
\end{array}\right) 
$$
for some $B_i$. If we denote by $
\nabla_{E, \boldsymbol{\beta}}, 
\nabla_{F, \boldsymbol{\beta}}, 
\nabla_{G, \boldsymbol{\beta}}$ the operator 
$\nabla_{\boldsymbol{\beta}}$ in theorem \ref{micstr} for 
$(E,\nabla_E)$, $(F,\nabla_F), (G,\nabla_G)$ respectively, 
we can calculate $\nabla_{G, \boldsymbol{\beta}}$ as 
\begin{align}
\frac{1}{\boldsymbol{\beta}!}\nabla_{G,\boldsymbol{\beta}}
& = 
\frac{1}{\boldsymbol{\beta}!}\left(\begin{array}{cc}\nabla_{E,\boldsymbol{\beta}}&0\\
0&\nabla_{F,\boldsymbol{\beta}} \end{array}\right) \label{potenza} \\ 
& \hspace{1cm} + \sum_{i=1}^{l}\sum_{b+c=\boldsymbol{\beta}-e_i}\frac{b!c!}{\boldsymbol{\beta}!}\frac{1}{b!}\left(\begin{array}{cc}\nabla_{E,b}&0\\
0&\nabla_{F,b}
\end{array}\right)\left(\begin{array}{cc}0&B_i\\
0&0
\end{array}\right)\frac{1}{c!}\left(\begin{array}{cc}\nabla_{E,c}&0\\
0&\nabla_{F,c}
\end{array}\right), \nonumber 
\end{align}
with $e_i=(0,\dots,1,\dots,0)$ with $1$ at the $i$-th place.
We know that there exists a sequence $l(n)$ of integers with $l(n) = o(n)$ as 
$n \rightarrow \infty$ such that 
$$p^{l(n)}\frac{1}{\boldsymbol{\beta}!}\nabla_{E,\boldsymbol{\beta}}(\tilde{E}) \subset \tilde{E}, \quad 
p^{l(n)}\frac{1}{\boldsymbol{\beta}!}\nabla_{F,\boldsymbol{\beta}}(\tilde{F}) 
\subset \tilde{F}$$
for every $\boldsymbol{\beta}$ such that $|\boldsymbol{\beta}|\leq n$, because both $(E, \{e_n\})$ and $(F, \{f_n\})$ are special.
If $\alpha$ is a positive integer such that $p^{\alpha}B_i(\tilde{E}) \subset \tilde{F}$ for $e\in \tilde{E}$ for any $i=1,\dots, l$, then 
$$k(n)=l(n) + v_p(\frac{\boldsymbol{\beta}!}{b!c!}) + \alpha$$ 
is a sequence of integers such that 
$$p^{k(n)}\frac{1}{\boldsymbol{\beta}!}\nabla_{G,\boldsymbol{\beta}}
(\tilde{E}\oplus\tilde{F}) 
\subset \tilde{E}\oplus\tilde{F}$$
for every $\boldsymbol{\beta}$ such that $|\boldsymbol{\beta}|\leq n$.
We know that $l(n)=o(n)$ for $n\rightarrow \infty$ and $\alpha$ does not depend on $n$, so that the only thing that we have to show is that 
$$\max_{|\boldsymbol{\beta}|\leq n, b+c=\boldsymbol{\beta}-e_i} v_p(\frac{\boldsymbol{\beta}!}{b!c!})=o(n), $$ 
which is standard (see \cite{Shi00} proposition 5.2.14 for example). 
\end{proof}
In the next lemma we show that the category of special objects is closed by taking subquotients.
\begin{lemma}\label{subquotients}
Let us suppose that we have an exact sequence in 
$\widehat{Str}(\hat{U}/V)$:
$$0\longrightarrow(E,\{e_n\})\longrightarrow(G,\{g_n\})\longrightarrow
(F,\{f_n\})\longrightarrow0.$$
Then, if $(G,\{g_n\})$ is special, then $(E,\{e_n\})$ and 
$(F,\{f_n\})$ are special. 
\end{lemma}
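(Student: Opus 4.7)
Since being special is a local condition, I would immediately reduce to the local situation, taking local coordinates $x_1,\dots,x_l$ of $\hat U$ and a Zariski-open on which $E$, $F$, $G$ are all locally free in the sense of the paper. Because $F$ is projective as a $K\otimes\mathcal{O}_{\hat{U}}$-module, the exact sequence splits in the underlying category of isocoherent sheaves, so after choosing a splitting I can write $G=E\oplus F$ and pick $p$-torsion free coherent lattices $\tilde E\subset E$, $\tilde F\subset F$ so that $\tilde G:=\tilde E\oplus\tilde F$ is a lattice in $G$. Since the "special" property is independent of the lattice chosen, it suffices to verify it on these particular lattices.

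With respect to the decomposition $\tilde G=\tilde E\oplus\tilde F$, the stratification $g_n$ is upper triangular,
$$
g_n \;=\; \begin{pmatrix} e_n & C_n \\ 0 & f_n \end{pmatrix},
$$
for some off-diagonal $C_n:\mathcal{O}_{\hat U^n}\otimes F\to E\otimes\mathcal{O}_{\hat U^n}$, and a standard matrix inversion gives
$$
g_n^{-1} \;=\; \begin{pmatrix} e_n^{-1} & -e_n^{-1}C_n f_n^{-1} \\ 0 & f_n^{-1}\end{pmatrix}.
$$
Let $k(n)=o(n)$ be a sequence witnessing the specialness of $(G,\{g_n\})$, i.e.\ such that $p^{k(n)}g_n$ sends $\mathcal{O}_{\hat U^n}\otimes\tilde G$ into $\tilde G\otimes\mathcal{O}_{\hat U^n}$ and similarly for $g_n^{-1}$. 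Restricting the inclusion $p^{k(n)}g_n(\mathcal{O}_{\hat U^n}\otimes\tilde G)\subset\tilde G\otimes\mathcal{O}_{\hat U^n}$ to the subspace $\mathcal{O}_{\hat U^n}\otimes\tilde E$ gives $p^{k(n)}e_n(\mathcal{O}_{\hat U^n}\otimes\tilde E)\subset(\tilde E\oplus\tilde F)\otimes\mathcal{O}_{\hat U^n}$; but the image already lies in $E\otimes\mathcal{O}_{\hat U^n}$, and the direct sum decomposition of $\tilde G\otimes\mathcal{O}_{\hat U^n}$ forces it to land in $\tilde E\otimes\mathcal{O}_{\hat U^n}$. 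Projecting the same inclusion onto $\tilde F\otimes\mathcal{O}_{\hat U^n}$ gives $p^{k(n)}f_n(\mathcal{O}_{\hat U^n}\otimes\tilde F)\subset\tilde F\otimes\mathcal{O}_{\hat U^n}$. The exact same restriction/projection argument applied to $p^{k(n)}g_n^{-1}$ yields the analogous inclusions for $e_n^{-1}$ and $f_n^{-1}$, using that the diagonal entries of $g_n^{-1}$ are precisely $e_n^{-1}$ and $f_n^{-1}$.

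Thus the same sequence $k(n)=o(n)$ witnesses the specialness of both $(E,\{e_n\})$ and $(F,\{f_n\})$, finishing the proof. The only delicate point in the argument is making sure one can find lattices that are compatible with the direct sum splitting; this is why I would first split the sequence of isocoherent sheaves (using local projectivity of $F$) and only then choose $\tilde G$ as $\tilde E\oplus\tilde F$, so that the direct sum decomposition is preserved after tensoring with $\mathcal{O}_{\hat U^n}$ and the "restrict/project" bookkeeping on the upper-triangular matrix goes through cleanly. No growth estimate on $k(n)$ is needed beyond what is already provided by the specialness of $G$.
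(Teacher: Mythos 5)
Your proposal is correct and follows essentially the same route as the paper: split the sequence of isocoherent sheaves using projectivity of $F$, take the lattice $\tilde E\oplus\tilde F$, exploit the upper-triangular block form, and restrict/project to extract the conditions for $E$ and $F$ with the same sequence $k(n)$. The only cosmetic difference is that the paper phrases the computation via the operators $\frac{1}{\boldsymbol{\beta}!}\nabla_{\boldsymbol{\beta}}$ from the explicit formula for $\epsilon_n$ (leaving the $\epsilon_n^{-1}$ condition implicit), while you work directly with $g_n$ and $g_n^{-1}$, which is slightly more complete.
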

\begin{proof}
As in lemma \ref{estensioni}, we have the isomorphisms 
$K\otimes(\tilde{E}\oplus\tilde{F})= E \oplus F = G$ as 
$K \otimes_V \mathcal{O}_{\hat{U}}$-modules. 
By hypothesis, there exists a sequence of integers $k(n)$ with 
$k(n)=o(n)$ as $n\rightarrow \infty$ such that 
\begin{equation}\label{g}
p^{k(n)}\frac{1}{\boldsymbol{\beta}!}\nabla_{G,\boldsymbol{\beta}}
(\tilde{E}\oplus\tilde{F}) 
\subset
 \tilde{E}\oplus\tilde{F}
\end{equation} 
for $|\boldsymbol{\beta}|\leq n$. 
By restricting \eqref{g} to $E$, we 
obtain the inclusion 
$$p^{k(n)}\frac{1}{\boldsymbol{\beta}!}\nabla_{E,\boldsymbol{\beta}}
(\tilde{E}) 
\subset 
\tilde{E}$$   
and this implies that $(E,\{e_n\})$ is special. 
Also, by sending \eqref{g} to $F$, we obtain the inclusion 
$$p^{k(n)}\frac{1}{\boldsymbol{\beta}!}\nabla_{F,\boldsymbol{\beta}}
(\tilde{F}) 
\subset 
\tilde{F}$$   
and this implies that $(F,\{f_n\})$ is also special. 
\end{proof}

Since the proof of lemmata \ref{estensioni}, \ref{subquotients}
 are finished, the proof of theorem \ref{extsq} is also finished. 

We have the following corollary as an immediate consequence of 
theorems \ref{teotensor}, \ref{extsq}. 

\begin{cor}\label{cor}
Let $\overline{\Sigma}=\prod_{h=1}^t\overline{\Sigma}_h \subset (\mathbb{Z}_p/\mathbb{Z})^t$ be a subset which satisfies (NLG) and 
let us fix a section $\tau: \mathbb{Z}_p/\mathbb{Z}\rightarrow \mathbb{Z}_p$. 
Let $\mathcal{E}$ be an object in $I^{\dag}(U_k/V)^{\log, \overline{\Sigma}}$. 
Then the functor $\psi_{\tau}$ induces an equivalence of rigid abelian tensor 
categories
\begin{equation}\label{final}
I^{\dag}(U_k/V)^{\log, \overline{\Sigma}}(\mathcal{E})\longrightarrow MIC(U_K/K)^{reg, \overline{\Sigma}}(\psi_{\tau}(\mathcal{E})), 
\end{equation}
where the left hand side (resp. the right hand side) is the 
smallest full subcategory containing $\mathcal{E}$ (resp. 
$\psi_{\tau}(\mathcal{E})$) and closed under extension, 
subquotient, tensor and dual. 
\end{cor}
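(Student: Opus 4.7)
The plan is to deduce the corollary from the structural properties of $\psi_\tau$ already established: its full faithfulness (from the construction as a composition of fully faithful functors and equivalences), theorem \ref{teotensor} (tensor functoriality and independence from $\tau$), and theorem \ref{extsq} (closure of the essential image under extensions and subquotients). Throughout, I will write $\mathcal{C}$ and $\mathcal{D}$ for the two sides of \eqref{final}; both are full subcategories of rigid abelian tensor categories (propositions \ref{abelian tensor} and \ref{tannakianalg}) which are closed by construction under extension, subquotient, tensor and dual, and hence are themselves rigid abelian tensor.

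First I would note that $\psi_\tau$ is fully faithful, being the composite of the equivalence $j_\tau^{\dag,-1}$ of theorem \ref{mainstable}, the fully faithful $\Phi_\tau$ of theorem \ref{phigamma}, the equivalence $\Gamma_\tau^{-1}$ of theorem \ref{phigamma}, and the equivalence $P_\tau$ of theorem \ref{P}. Denote its essential image by $\mathcal{I} \subset MIC(U_K/K)^{reg, \overline{\Sigma}}$, so that $\psi_\tau$ restricts to an equivalence $I^{\dag}(U_k/V)^{\log, \overline{\Sigma}} \simeq \mathcal{I}$. Next I would check that $\mathcal{I}$ is closed in $MIC(U_K/K)^{reg, \overline{\Sigma}}$ under all four operations: closure under extension and subquotient is exactly theorem \ref{extsq}, and closure under tensor and dual follows from theorem \ref{teotensor}, since for $X, Y$ in the source one has $\psi_\tau(X) \otimes \psi_\tau(Y) \cong \psi_\tau(X \otimes Y) \in \mathcal{I}$ and $\psi_\tau(X)^\ast \cong \psi_\tau(X^\ast) \in \mathcal{I}$.

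The conclusion is then formal. By minimality of $\mathcal{D}$ and the fact that $\mathcal{I}$ is a subcategory of $MIC(U_K/K)^{reg, \overline{\Sigma}}$ containing $\psi_\tau(\mathcal{E})$ that is closed under the four operations, $\mathcal{D} \subset \mathcal{I}$; the same argument applied to $\psi_\tau(\mathcal{C})$, which is such a subcategory since the four operations in $\mathcal{I}$ coincide with those in the ambient category and, via the equivalence, correspond to the four operations in the source, yields $\mathcal{D} \subset \psi_\tau(\mathcal{C})$. Conversely, the preimage of $\mathcal{D}$ under the equivalence $I^{\dag}(U_k/V)^{\log, \overline{\Sigma}} \simeq \mathcal{I}$ is a full subcategory of the source containing $\mathcal{E}$ and closed under the four operations, so by minimality of $\mathcal{C}$ this preimage contains $\mathcal{C}$, giving $\psi_\tau(\mathcal{C}) \subset \mathcal{D}$. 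Hence $\psi_\tau$ restricts to the claimed equivalence of rigid abelian tensor categories $\mathcal{C} \simeq \mathcal{D}$. There is no further obstacle: the entire substance of the argument is already encapsulated in theorems \ref{teotensor} and \ref{extsq}, and the remainder is a purely categorical minimality argument.
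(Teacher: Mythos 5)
Your proposal is correct and is essentially the argument the paper intends: the paper states the corollary as an immediate consequence of Theorems \ref{teotensor} and \ref{extsq}, and what you have written is precisely the formal unwinding of that claim (full faithfulness of $\psi_{\tau}$ from its construction, closure of the essential image under the four operations, and the minimality argument on both sides). The only detail worth noting explicitly is that closure under subquotients in the ambient abelian category is what makes the generated subcategories abelian, which you implicitly use and which is standard.
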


Under a certain hypothesis, we have the following result which 
is a Tannakian interpretation of theorems \ref{teotensor}, \ref{extsq} and 
corollary \ref{cor}. 

\begin{teo}
Let $\overline{\Sigma}=\prod_{h=1}^t\overline{\Sigma}_h \subset (\mathbb{Z}_p/\mathbb{Z})^t$ be a subset which satisfies (NLG) and 
let us fix a section $\tau: \mathbb{Z}_p/\mathbb{Z}\rightarrow \mathbb{Z}_p$. 
If there exists a $K$-rational point $x$ of $U_K$ and $U_K$ is connected, then 
the functor 
$$\psi_{\tau}:I^{\dag}(U_k/V)^{\log, \overline{\Sigma}}\longrightarrow MIC(U_K/K)^{reg, \overline{\Sigma}}$$
is a functor of neutral 
Tannakian categories inducing the surjection of Tannaka 
duals. Also, for an object $\mathcal{E}$ in 
$I^{\dag}(U_k/V)^{\log, \overline{\Sigma}}$, the functor 
$$\psi_{\tau}:I^{\dag}(U_k/V)^{\log, \overline{\Sigma}}(\mathcal{E})\longrightarrow MIC(U_K)^{reg, \overline{\Sigma}}(\psi_{\tau}(\mathcal{E}))$$
is an equivalence of neutral Tannakian categories (thus induces the isomorphism 
of Tannaka duals). 
\end{teo}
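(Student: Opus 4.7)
The plan is to equip both categories with fiber functors coming from the $K$-rational point $x \in U_K$, making them neutral Tannakian, and then to apply the classical criterion of Deligne--Milne (Proposition 2.21 of \emph{Tannakian Categories}) together with theorems \ref{teotensor} and \ref{extsq}. First, I would set $\omega_1(E) := x^{\ast} E = E_x$, giving a functor from $MIC(U_K/K)^{reg, \overline{\Sigma}}$ to finite-dimensional $K$-vector spaces. Since each object is locally free of finite rank and $U_K$ is connected, $\omega_1$ is a $K$-linear exact faithful tensor functor, and proposition \ref{tannakianalg} then says that $(MIC(U_K/K)^{reg, \overline{\Sigma}}, \omega_1)$ is neutral Tannakian. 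Next, I define $\omega_2 := \omega_1 \circ \psi_{\tau}$. To see this is a fiber functor, I observe that $\psi_{\tau}$ is a composite of three equivalences (theorems \ref{mainstable}, \ref{phigamma}, \ref{P}) together with the functor $\Phi_{\tau}$ of ``forgetting the convergence condition,'' which is exact because it leaves the underlying locally free sheaf unchanged; hence $\psi_{\tau}$ itself is exact and faithful, and combined with theorem \ref{teotensor} it is a $K$-linear exact faithful tensor functor. So $\omega_2$ is too, and $(I^{\dag}(U_k/V)^{\log, \overline{\Sigma}}, \omega_2)$ is neutral Tannakian by proposition \ref{abelian tensor}, with $\psi_{\tau}$ a morphism of such categories by construction.

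For the surjectivity of Tannaka duals I would then invoke Deligne--Milne: the map $\pi(MIC(U_K/K)^{reg, \overline{\Sigma}}, \omega_1) \to \pi(I^{\dag}(U_k/V)^{\log, \overline{\Sigma}}, \omega_2)$ is faithfully flat if and only if $\psi_{\tau}$ is fully faithful and every subobject of $\psi_{\tau}(X)$ in the target lies in the essential image of $\psi_{\tau}$. Full faithfulness is the main theorem of \cite{DiP12} recalled at the beginning of this section. The closure of the essential image under subobjects is part of theorem \ref{extsq}: given $Y' \hookrightarrow \psi_{\tau}(X)$, theorem \ref{extsq} produces $X' \in I^{\dag}(U_k/V)^{\log, \overline{\Sigma}}$ with $\psi_{\tau}(X') \cong Y'$, and full faithfulness lifts the inclusion $\psi_{\tau}(X') \hookrightarrow \psi_{\tau}(X)$ to a morphism $f: X' \to X$ whose image under $\psi_{\tau}$ is mono; exactness and faithfulness of $\psi_{\tau}$ then force $f$ itself to be mono, exhibiting $Y'$ as $\psi_{\tau}$ of a subobject.

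For the final assertion on the subcategories generated by $\mathcal{E}$ and $\psi_{\tau}(\mathcal{E})$, I would invoke corollary \ref{cor}: it already provides an equivalence of rigid abelian tensor categories between the two subcategories, and since $\omega_2$ was \emph{defined} as $\omega_1 \circ \psi_{\tau}$, the restrictions of the fiber functors are tautologically identified under this equivalence. Hence it is an equivalence of neutral Tannakian categories, and so the induced map on Tannaka duals is an isomorphism. The main obstacle is nothing deep, since the two serious theorems (\ref{teotensor} and \ref{extsq}) have already been proven and corollary \ref{cor} packages their consequence; the only care required is the routine verification that $\psi_{\tau}$ is exact (done piece by piece along its four constituent functors) and that $x^{\ast}$ genuinely yields a fiber functor in the rigid-analytic / algebraic setting, both of which reduce to the local freeness of objects and the connectedness of $U_K$.
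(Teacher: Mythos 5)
Your proposal is correct and follows essentially the same route as the paper: a fiber functor $\mathcal{F}\mapsto\mathcal{F}|_x$ on $MIC(U_K/K)^{reg,\overline{\Sigma}}$, its composite with $\psi_{\tau}$ on the isocrystal side, Deligne--Milne's proposition 2.21 combined with theorems \ref{teotensor} and \ref{extsq} for the surjection of Tannaka duals, and corollary \ref{cor} for the final equivalence. The only difference is cosmetic: where you verify exactness and faithfulness of the fiber functors by hand (local freeness, connectedness, and a functor-by-functor check on $\psi_{\tau}$), the paper gets them for free from $\mathrm{End}(\mathcal{O})=K$ via corollary 2.10 of Deligne's \emph{Cat\'egories tannakiennes}.
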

\begin{proof}
Thanks to theorems \ref{teotensor}, \ref{extsq}, 
corollary \ref{cor} and proposition 2.21 in \cite{DelMil82}, 
it is enough to show the existence of a fiber functor from 
$I^{\dag}(U_k/V)^{\log, \overline{\Sigma}}, 
MIC(U_K/K)^{reg, \overline{\Sigma}}$ to the category of 
finite dimensional $K$-vector spaces. 
Let $x$ be the $K$-rational point that exists by hypothesis. 
We define a functor $\omega: MIC(U_K/K)^{reg, \overline{\Sigma}}\rightarrow Vec_K$ by $(\mathcal{F},\nabla)\mapsto\mathcal{F}|_x$, which is an exact tensor functor because $\mathcal{F}$ is locally free. To prove that is fully faithful we can use corollary 2.10 of \cite{Del90}, because $End(\mathcal{O}_{U_K}, d)=K$ 
(which is proven for example in 
claim 1 of proposition 3.16 of \cite{Shi00}). 
Also, if we define the functor 
$\omega':I^{\dag}(U_k/V)^{\log, \overline{\Sigma}} 
\rightarrow Vec_K$ by $\omega' := \omega \circ \psi_{\tau}$, it is 
also a fiber functor because $End(\mathcal{O})=K$ also in 
the category $I^{\dag}(U_k/V)^{\log, \overline{\Sigma}}$ because of 
the full faithfulness on $\psi_{\tau}$. 
\end{proof}

\section*{Acknowledgements}
The main part of this work was done when the first author was at the Graduate School of Mathematical Sciences of the University of Tokyo supported by a postdoctoral fellowship and kaken-hi (grant-in-aid) of the Japanese Society for the Promotion of Science (JSPS). She is now supported by a postdoctoral fellowship of Labex IRMIA. 
When the main part of this work was done, 
the second author was supported by JSPS Grant-in-Aid for 
Young Scientists (B) 21740003. 
Currently he is supported by JSPS 
Grant-in-Aid for Scientific Research (C) 25400008, 
Grant-in-Aid for Scientific Research (B) 22340001 and 
23340001.

\bibliographystyle{amsalpha}

\end{document}